\title{\bf{A new family of maximal curves over a finite field}}
\author{M.~Giulietti ${}^*$ and G.~Korchm\'aros
 \thanks{Research supported by  the Italian
    Ministry MURST, Strutture geometriche, combinatoria e loro
    applicazioni, PRIN 2006-2007}}
\newtheorem{theorem}{Theorem}[section]
\newtheorem{proposition}[theorem]{Proposition}
\newtheorem{lemma}[theorem]{Lemma}
\theoremstyle{definition}
\newtheorem*{definition*}{Definition}
\newtheorem*{proposition*}{Proposition}
\newtheorem*{corollary*}{Corollary}
\newtheorem*{lemma*}{Lemma}
\def\cC{\mathcal C}
\def\cH{\mathcal H}
\def\cX{\mathcal X}
\def\cY{\mathcal Y}
\def\K{\mathbb{K}}
\def\PG{{\rm{PG}}}
\def\deg{\mbox{\rm deg}}
\def\fqq{{\mathbb F}_{q^2}}
\def\fns{{\mathbb F}_{n^2}}
\newcommand{\PSU}{\mbox{\rm PSU}}
\newcommand{\PGU}{\mbox{\rm PGU}}
\newcommand{\SU}{\mbox{\rm SU}}
\newcommand{\aut}{\mbox{\rm Aut}}
\newcommand{\gS}{\Sigma}
\def\PI{X_{\infty}}
\newcommand{\ha}{{\textstyle\frac{1}{2}}}
\begin{document}
\maketitle

    \begin{abstract}
A new family of $\fqq$-maximal curves is presented and some of
their properties are investigated.
    \end{abstract}

\section{Introduction}\label{s1}

Let $q$ be a power of a prime number $p$. A maximal curve defined
over a finite field $\fqq$ with $q^2$ elements, briefly an
$\fqq$-maximal curve, is a projective, geometrically irreducible,
non-singular algebraic curve defined over $\fqq$ whose number of
$\fqq$-rational points attains the famous Hasse-Weil upper bound
$q^2+1+2gq$ where $g$ is the genus of the curve. Maximal curves
have also been investigated for their applications in Coding
theory. Surveys on maximal curves are found in
 \cite{ft1,garcia-stichtenoth1995ieee,garcia2001sur,garcia2002,geer2001sur,geer2001nato},
see also \cite{ft,fgt,r-sti,sti-x}.

By a result of Serre, see Lachaud \cite[Proposition 6]{lachaud1987},
any non-singular curve which is $\fqq$-covered by an $\fqq$-maximal
curve is also $\fqq$-maximal. Apparently, the known maximal curves
are all Galois $\fqq$-covered by one of the curves below,
 see \cite{abdon-garcia2004,abdon-quoos2004,abdon-torres1999,abdon-torres2005,cakcak-ozbudak2004,cakcak-ozbudak2005,cossidente-korchmaros-torres1999,
cossidente-korchmaros-torres2000,garcia-stichtenoth1999bis,garcia-stichtenoth2006,garcia-stichtenoth-xing2001,
giulietti-hirschfeld-korchmaros-torres2006,giulietti-hirschfeld-korchmaros-torres2006bis,giulietti-korchmaros-torres2004,hansen1992,hansen-sti1990,
pasticci2006,pedersen1992}.
\begin{itemize}
\item[(A)] for every $q$, the Hermitian curve over $\fqq$;
\item[(B)] for every $q=2q_0^2$ with $q_0=2^h,\,h \ge 1$, the DLS
curve (the Deligne-Lusztig curve associated with the
 Suzuki group) over ${\mathbb F}_{q^4}$;
\item[(C)] for every $q=3q_0^2$ with $q_0=3^h,\,h\ge 1$, the DLR
curve (the Deligne-Lusztig curve associated with the
 Ree group) over ${\mathbb F}_{q^6}$;
\item[(D)] for every $q=p^{3h}$, the  GS-curve (the
Garcia-Stichtenoth curve) over $\fqq$.
\end{itemize}
It seems plausibile that each of the known $\fqq$-maximal curve is
Galois $\fqq$-covered by exactly one of the above curves, apart from
a very few possible exceptions for small $q$'s. This has been
investigated so far in three special cases: The smallest GS-curve,
$q=8$, is Galois $\fqq$-covered by the Hermitian curve over
${\mathbb{F}}_{64}$, but this does not hold for $q=27$, see
\cite{garcia-stichtenoth2006}, while an unpublished result by Rains
and Zieve states that the smallest DLR-curve, q=3,  is not Galois
${\mathbb{F}}_{3^6}$-covered by the Hermitian curve over
${\mathbb{F}}_{3^6}$.

In this preliminary report, a new $\fqq$-maximal curve $\cX$ is
constructed for every $q=n^3$. For $q>8$, the relevant property of
$\cX$ is not being $\fqq$-covered by any of the four curves
(A),(B),(C),(D); we stress that this even holds for non Galois
$\fqq$-coverings. The case $q=8$ remains open.

The automorphism group $\aut(\cX)$ of $\cX$ is also determined;
its size turns out to be large compared to the genus $\cX$. For
curves with large automorphism groups, see
\cite{henn1978,roquette1970,stichtenoth1973I}.

\section{Construction}\label{const}
Throughout this paper, $p$ is a prime, $n=p^h$ and $q=n^3$ with
$h\geq 1$.

We will need some identities in $\fns[X]$ concerning the
polynomial
\begin{equation}
\label{segoviaeq} h(X)=\sum_{i=0}^n (-1)^{i+1}X^{i(n-1)}.
\end{equation}
\begin{lemma}
\label{lemmasegovia1}
\begin{equation}
\label{segoviaeq1}  X^{n^2}-X=(X^n+X)h(X),
\end{equation}
and
\begin{equation}
\label{segoviaeq2} X^{n^3}+X-(X^n+X)^{n^2-n+1}=(X^n+X)h(X)^{n+1},
\end{equation}
\end{lemma}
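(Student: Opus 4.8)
\medskip
\noindent\textbf{Proof plan.} The plan is to read \eqref{segoviaeq1} as a signed geometric-series identity, and then to obtain \eqref{segoviaeq2} from it by multiplying through by a single well-chosen factor so that $h$ disappears.

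For \eqref{segoviaeq1}, set $T=X^{n-1}$, so that $h(X)=\sum_{i=0}^{n}(-1)^{i+1}T^{i}$. Multiplying by $1+T$ makes the interior terms cancel in pairs: for $1\le k\le n$ the coefficient of $T^{k}$ in $(1+T)h(X)$ is $(-1)^{k+1}+(-1)^{k}=0$, so only the two extreme terms survive and $(1+T)h(X)=-1+(-1)^{n+1}T^{n+1}$. Since $n=p^{h}$, either $p=2$ (and the signs play no role) or $n$ is odd and $n+1$ even; either way $(-1)^{n+1}=1$, giving $(1+X^{n-1})h(X)=X^{n^{2}-1}-1$. Multiplying by $X$ yields $(X^{n}+X)h(X)=X^{n^{2}}-X$, which is \eqref{segoviaeq1}.

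For \eqref{segoviaeq2}, write $P(X)$ and $Q(X)$ for its left- and right-hand sides. Since $\fns[X]$ is an integral domain and $(X^{n}+X)^{n}\neq0$, it is enough to prove $(X^{n}+X)^{n}P=(X^{n}+X)^{n}Q$. The point is that the right-hand side simplifies completely: $(X^{n}+X)^{n}Q=\big((X^{n}+X)h(X)\big)^{n+1}=(X^{n^{2}}-X)^{n+1}$ by \eqref{segoviaeq1}, while $(X^{n}+X)^{n}P=(X^{n}+X)^{n}(X^{n^{3}}+X)-(X^{n}+X)^{n^{2}+1}$. Hence \eqref{segoviaeq2} is equivalent to the identity, now free of $h$,
\begin{equation*}
(X^{n}+X)^{n}(X^{n^{3}}+X)-(X^{n}+X)^{n^{2}+1}=(X^{n^{2}}-X)^{n+1}.\tag{$\star$}
\end{equation*}

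Finally, $(\star)$ is checked by direct expansion using only additivity of the $p$-th power map: $(X^{n}+X)^{n}=X^{n^{2}}+X^{n}$ and, iterating, $(X^{n}+X)^{n^{2}+1}=(X^{n^{3}}+X^{n^{2}})(X^{n}+X)$, whereas $(X^{n^{2}}-X)^{n+1}=(X^{n^{3}}-X^{n})(X^{n^{2}}-X)$ (here $(-1)^{n}=-1$ in every characteristic). Multiplying everything out, both sides of $(\star)$ reduce to $X^{n^{3}+n^{2}}+X^{n+1}-X^{n^{3}+1}-X^{n^{2}+n}$. The only step that is not purely mechanical is recognising in the second part that multiplication by $(X^{n}+X)^{n}$ turns the statement into $(\star)$; after that, the argument is the freshman's-dream expansion of $(\star)$ together with a one-line check that $(-1)^{n+1}=1$ also when $p=2$.
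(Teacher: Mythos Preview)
Your proof is correct. For \eqref{segoviaeq1} you spell out the ``straightforward computation'' the paper only alludes to, via the telescoping sum in $T=X^{n-1}$; this is the natural argument.

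For \eqref{segoviaeq2} your route is closely related to, but slightly more direct than, the paper's. The paper first records the companion identity
\[
(X^n-X)^n\bigl(X^{n^3}-X+(X^n-X)^{n^2-n+1}\bigr)=(X^{n^2}-X)^{n+1}
\]
(which is your $(\star)$ with all plus signs in $X^n+X$ and $X^{n^3}+X$ replaced by minus signs), and then passes to \eqref{segoviaeq2} by the substitution $X\mapsto\rho X$ with $\rho\in\fqq$ satisfying $\rho^n=-\rho$, exploiting $\rho^{n^2}=\rho$, $\rho^{n^3}=-\rho$ to flip the signs. You instead multiply \eqref{segoviaeq2} directly by $(X^n+X)^n$, use \eqref{segoviaeq1} to recognise the right-hand side as $(X^{n^2}-X)^{n+1}$, and verify the resulting $h$-free identity $(\star)$ by Frobenius expansion. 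The underlying computation is the same (both $(\star)$ and the paper's companion identity reduce to the same four-term polynomial $X^{n^3+n^2}+X^{n+1}-X^{n^3+1}-X^{n^2+n}$), but your version avoids the auxiliary element $\rho$ altogether, which is a small gain in simplicity.
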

\begin{proof}
A straightforward computation shows (\ref{segoviaeq1}). Also,
\begin{equation} \label{segoviaeq3}
(X^n-X)^n(X^{n^3}-X+(X^n-X)^{n^2-n+1})=(X^{n^2}-X)^{n+1}.
\end{equation}
Now, choose $\rho\in \fqq$ with $\rho^n=-\rho$ and replace $X$ by
$\rho X$. {}From (\ref{segoviaeq3}),
$$[(\rho X)^n- \rho X]^n[(\rho X)^{n^3}-\rho X+((\rho X)^n- \rho X)^{n^2-n+1}]= [(\rho
X)^{n^2}-(\rho X)]^{n+1}.$$ Since $\rho^{n^2}=\rho$ and
$\rho^{n^3}=-\rho$, the assertion (\ref{segoviaeq2}) follows.
\end{proof}

In the three--dimensional projective space $\PG(3,q^2)$ over $\fqq$,
consider the algebraic curve $\cX$ defined to be the complete
intersection of the surface $\gS$ with affine equation
\begin{equation}
\label{segovia2} Z^{n^2-n+1}=Yh(X),
\end{equation}
and the Hermitian cone $\cC$ with affine equation
\begin{equation}
\label{segovia4} X^n+X=Y^{n+1}.
\end{equation}
Note that $\cX$ is defined over $\fqq$ but it is viewed as a curve
over the algebraic closure $\K$ of $\fqq$. Moreover, $\cX$ has
degree $n^3+1$ and possesses a unique infinite point, namely the
infinite point $X_\infty$ of the $X$-axis.

A treatise on Hermitian surfaces over a finite field is found in
\cite{hirschfeld1985,segre1965}.

Our aim is to prove the following theorem.
\begin{theorem}
\label{segoviath} $\cX$ is an $\fqq$-maximal curve.
\end{theorem}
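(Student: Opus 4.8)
The plan is to use the classical Jacobian characterisation of maximality: a curve $\cX$ defined over $\fqq$ is $\fqq$-maximal if and only if, for an (equivalently, any) $\fqq$-rational point $P_{0}$, one has $\Phi(P)+qP\sim(q+1)P_{0}$ for every $P\in\cX$, where $\Phi$ is the $q^{2}$-power Frobenius; this says that the Frobenius endomorphism of $\mathrm{Jac}(\cX)$ equals $-q$, so that all its eigenvalues are $-q$ and $\#\cX(\fqq)=q^{2}+1+2gq$. We shall take $P_{0}=P_\infty$, the unique point at infinity, which is $\fqq$-rational, write $x,y,z\in\K(\cX)$ for the affine coordinate functions, and check the equivalence $\Phi(P)+qP\sim(q+1)P_\infty$; it suffices to do so for $P$ ranging over an infinite subset of $\cX$, since the Abel--Jacobi image of such a subset generates $\mathrm{Jac}(\cX)$.

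\textbf{Step 1: $\cX$ lies on a Hermitian surface.} From the defining equations (\ref{segovia4}) and (\ref{segovia2}) one has $(x^{n}+x)^{n^{2}-n+1}=(y^{n+1})^{n^{2}-n+1}=y^{q+1}$ and $(x^{n}+x)h(x)^{n+1}=(yh(x))^{n+1}=(z^{n^{2}-n+1})^{n+1}=z^{q+1}$, with $q=n^{3}$; adding these two identities and using (\ref{segoviaeq2}) of Lemma \ref{lemmasegovia1},
\[
x^{q}+x=(x^{n}+x)^{n^{2}-n+1}+(x^{n}+x)h(x)^{n+1}=y^{q+1}+z^{q+1}
\]
in $\K(\cX)$. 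Hence $\cX$ lies on the Hermitian surface $\cV\subset\PG(3,q^2)$ with equation $X^{q}W+XW^{q}=Y^{q+1}+Z^{q+1}$, and $P_\infty=(1:0:0:0)\in\cV$. Moreover $x$ defines a morphism $\cX\to\mathbb{P}^{1}$ of degree $n^{3}+1$ which is totally ramified over $\infty$ since $\cX$ has a unique point at infinity, so $v_{P_\infty}(x)=-(q+1)$; the defining equations then give $v_{P_\infty}(y)=-(q-n^{2}+n)$ and $v_{P_\infty}(z)=-q$. In particular $1,x,y,z$ have pairwise distinct pole orders at $P_\infty$, so $\cX$ spans $\PG(3,q^2)$, and any nonzero polynomial in $x,y,z$ has pole divisor supported at $P_\infty$ with the pole of $x$, of order $q+1$, dominating.

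\textbf{Step 2: a contact divisor.} Fix $P=(a:b:c:1)\in\cX$ and consider the plane $\Pi_{P}\colon\ X-b^{q}Y-c^{q}Z+a^{q}W=0$ (the polar of $P$ with respect to $\cV$). Since $P\in\cV$, $a^{q}+a=b^{q+1}+c^{q+1}$, whence $P\in\Pi_{P}$; raising that relation to the $q$-th power gives $\Phi(P)=(a^{q^{2}}:b^{q^{2}}:c^{q^{2}}:1)\in\Pi_{P}$ as well. The rational function on $\cX$ cutting out $\Pi_{P}\cap\cX$ is, after dehomogenising with $W=1$ and simplifying with $a^{q}+a=b^{q+1}+c^{q+1}$,
\[
g=(x-a)-b^{q}(y-b)-c^{q}(z-c),
\]
and the crux is the estimate $\ord_{P}(g)\ge q$. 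To see it, expand $x,y,z$ in a local parameter $s$ at $P$; since $q$ is a power of $p$, $x^{q}=a^{q}+(x-a)^{q}$, $y^{q+1}=b^{q+1}+b^{q}(y-b)+(y-b)^{q}(b+(y-b))$, and likewise for $z^{q+1}$, where $(x-a)^{q},(y-b)^{q},(z-c)^{q}$ all vanish to order $\ge q$ at $P$. Comparing the coefficients of $s^{1},\dots,s^{q-1}$ on the two sides of $x^{q}+x=y^{q+1}+z^{q+1}$, the constant terms matching by $a^{q}+a=b^{q+1}+c^{q+1}$, we find that $g$ has no term of degree $<q$, i.e.\ $\ord_{P}(g)\ge q$. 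By Step 1, $g$ has its only pole at $P_\infty$, of order $q+1$, so $\div_{0}(g)$ has degree $q+1=\deg\cX$; as it contains $\Phi(P)$ and contains $P$ with multiplicity $\ge q$, we get, for every $P$ with $\Phi(P)\ne P$, $\div_{0}(g)=qP+\Phi(P)$, and therefore $qP+\Phi(P)\sim(q+1)P_\infty$.

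Applying this to the infinitely many non-$\fqq$-rational points $P$ shows that the Frobenius endomorphism of $\mathrm{Jac}(\cX)$ agrees with $-q$ on the image of the Abel--Jacobi map, hence equals $-q$, so $\cX$ is $\fqq$-maximal. Two routine but necessary points remain: that $\cX$ is geometrically irreducible and nonsingular (so that $\mathrm{Jac}(\cX)$ and the local expansions above make sense), and the pole-order computation of Step 1. Both are obtained by regarding $\cX$ as the Kummer cover $z^{\,n^{2}-n+1}=yh(x)$ of the Hermitian curve $x^{n}+x=y^{n+1}$ over $\fns$: by (\ref{segoviaeq1}), $h(x)$ has $n^{2}-n$ simple zeros on that curve, so $yh(x)$ has all its zeros simple and its only pole of order $n^{3}$, each of these orders being coprime to $n^{2}-n+1$; the cover is then tamely and totally ramified over these $n^{3}+1$ places, $\cX$ is smooth and irreducible, and Riemann--Hurwitz gives $g=\frac{1}{2}(n^{3}+1)(n^{2}-2)+1$. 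The only real obstacle is Step 2 --- producing a plane through both $P$ and $\Phi(P)$ with contact $\ge q$ at $P$; it works precisely because the construction was arranged, via identity (\ref{segoviaeq2}), so that the two equations of $\cX$ collapse to the single Hermitian relation $x^{q}+x=y^{q+1}+z^{q+1}$, which is what makes the characteristic-$p$ truncation yield the contact estimate.
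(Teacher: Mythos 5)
Your proof is correct, and it hinges on the same pivotal fact as the paper's, namely Lemma \ref{lemmasegovia2}: identity (\ref{segoviaeq2}) forces $\cX$ onto the Hermitian surface $X^{q}+X=Y^{q+1}+Z^{q+1}$ of degree $q+1=\deg\cX$, and your Step 1 is the paper's proof of that lemma verbatim. From there the routes diverge. The paper establishes irreducibility by an intersection-theoretic argument with tangent planes of $\cH$ (Lemma \ref{segoviairr}) and then simply cites \cite{korchmaros-torres2001} for the fact that an irreducible curve of degree $q+1$ lying on a non-degenerate Hermitian variety is $\fqq$-maximal. You instead (i) get irreducibility, the nonsingular model, the genus and the pole orders at $X_\infty$ all at once from the Kummer description $z^{n^2-n+1}=yh(x)$ of $\cX$ over the Hermitian curve $x^n+x=y^{n+1}$, using that $yh(x)$ has only simple zeros and one pole of order $n^3$ prime to $n^2-n+1$ (a minor slip: $h(x)$ has $(n^2-n)(n+1)$ simple zeros on that curve, coming from its $n^2-n$ simple roots, but your conclusion about $yh(x)$ is right); this is arguably cleaner than Lemma \ref{segoviairr} and already anticipates Theorem \ref{segoviath1} and Section \ref{semigruppo}; and (ii) unpack the citation by verifying the fundamental equivalence $qP+\Phi(P)\sim(q+1)X_\infty$ directly via the polar plane of $P$ --- amusingly, your contact estimate $\ord_P(g)\ge q$ is exactly the local computation the paper performs inside the proof of Lemma \ref{segoviairr} for a different purpose. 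You also correctly restrict to the infinitely many places with $\Phi(P)\ne P$, which suffices to force the Frobenius to act as $-q$ on the Jacobian. In short, your argument is self-contained where the paper outsources the decisive step, at the cost of being longer; the underlying mechanism (degree-$(q+1)$ embedding into a Hermitian surface) is the same.
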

To do this, it is enough to show the following two lemmas, see
\cite{korchmaros-torres2001}.
\begin{lemma}
\label{lemmasegovia2} The curve $\cX$ lies on the Hermitian surface
$\cH$ with affine equation
\begin{equation}
\label{segovia3} X^{n^3}+X=Y^{n^3+1}+Z^{n^3+1}.
\end{equation}
\end{lemma}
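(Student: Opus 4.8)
The plan is to derive the equation (\ref{segovia3}) of $\cH$ as a purely formal algebraic consequence of the two equations (\ref{segovia2}) and (\ref{segovia4}) cutting out $\cX$, with the polynomial identity (\ref{segoviaeq2}) of Lemma \ref{lemmasegovia1} doing essentially all of the work. First I would rewrite (\ref{segoviaeq2}) as
$X^{n^3}+X=(X^n+X)^{n^2-n+1}+(X^n+X)\,h(X)^{n+1}$,
and then evaluate the right-hand side at an affine point of $\cX$.

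For the first summand, substituting the equation $X^n+X=Y^{n+1}$ of the Hermitian cone $\cC$ gives $(X^n+X)^{n^2-n+1}=Y^{(n+1)(n^2-n+1)}=Y^{n^3+1}$, the exponent collapsing by the factorization $n^3+1=(n+1)(n^2-n+1)$. For the second summand, the same substitution yields $(X^n+X)\,h(X)^{n+1}=\bigl(Y\,h(X)\bigr)^{n+1}$, and now the equation $Z^{n^2-n+1}=Y\,h(X)$ of $\gS$ turns this into $\bigl(Z^{n^2-n+1}\bigr)^{n+1}=Z^{(n^2-n+1)(n+1)}=Z^{n^3+1}$, once more via the same factorization. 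Adding the two, every affine point of $\cX$ satisfies $X^{n^3}+X=Y^{n^3+1}+Z^{n^3+1}$, i.e.\ lies on $\cH$; moreover the unique point at infinity $X_\infty=(1:0:0:0)$ of $\cX$ trivially satisfies the homogenization $X^{n^3}T+XT^{n^3}=Y^{n^3+1}+Z^{n^3+1}$ of (\ref{segovia3}). Hence $\cX\subseteq\cH$.

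I do not expect a genuine obstacle here: once identity (\ref{segoviaeq2}) is available the argument is a two-line substitution, and the only thing to watch is the bookkeeping of exponents, all of it governed by $n^3+1=(n+1)(n^2-n+1)$ — in this sense the real content of the lemma has already been isolated in Lemma \ref{lemmasegovia1}. (It is also worth recording, though not needed for the statement itself, that with $q=n^3$ equation (\ref{segovia3}) reads $X^q+X=Y^{q+1}+Z^{q+1}$, a standard affine form of the nondegenerate Hermitian surface of $\PG(3,q^2)$, which justifies calling $\cH$ "the Hermitian surface".)
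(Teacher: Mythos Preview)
Your proof is correct and follows essentially the same route as the paper: both invoke identity (\ref{segoviaeq2}), substitute $X^n+X=Y^{n+1}$ from (\ref{segovia4}) and $Z^{n^2-n+1}=Yh(X)$ from (\ref{segovia2}), and use the factorization $n^3+1=(n+1)(n^2-n+1)$ to collapse the exponents, with the point $X_\infty$ handled separately. Your write-up is in fact a bit more explicit about the exponent bookkeeping than the paper's, but there is no difference in substance.
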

\begin{proof} Clearly,  $X_\infty\in \cH$.
Let $P=(x,y,z)$ be any affine point of $\cX$. From
(\ref{segovia2}), $z^{n^3+1}=y^{n+1}h(x)^{n+1}$. On the other
hand, (\ref{segoviaeq2}) together with (\ref{segovia4}) imply that
$y^{n+1}h(x)^{n+1}=x^{n^3}+x-y^{n^3+1}$. This proves the
assertion.
\end{proof}
\begin{lemma}
\label{segoviairr} The curve $\cX$ is irreducible over $\K$.
\end{lemma}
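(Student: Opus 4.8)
The curve $\cX$ is cut out by two affine equations, so the natural strategy is to realise $K(\cX)$ as a tower of two explicit Kummer-type extensions of the rational function field $K(x)$ and prove irreducibility step by step. Introduce $\gS_1 = K(x,y)$ defined by $y^{n+1}=x^n+x$ (the Hermitian cone relation \eqref{segovia4}) and then $K(\cX) = \gS_1(z)$ with $z^{n^2-n+1}=y\,h(x)$ (the surface relation \eqref{segovia2}). First I would check that $y^{n+1}=x^n+x$ defines an irreducible (indeed absolutely irreducible) curve: this is classical — $x^n+x$ is a separable polynomial in $x$ with $n$ simple roots, so by the Kummer/Eisenstein-type irreducibility criterion $X^{n+1}-(x^n+x)$ is irreducible over $K(x)$ as long as $\gcd(n+1,\,\mathrm{ord}_{P}(x^n+x))=1$ for some place $P$; any simple zero of $x^n+x$ does the job. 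Hence $[\gS_1:K(x)]=n+1$ and $\gS_1$ is a function field of one variable over $K$ with field of constants $K$.

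**The second step.** For the extension $\gS_1(z)/\gS_1$ with $z^{n^2-n+1}=y\,h(x)$, I would again invoke the standard irreducibility criterion for radical extensions: $X^{n^2-n+1}-u$ is irreducible over a field $F$ of characteristic prime to $n^2-n+1$ precisely when $u$ is not a $d$-th power in $F$ for any prime $d\mid n^2-n+1$ (equivalently, when the coset of $u$ in $F^*/(F^*)^{n^2-n+1}$ has order $n^2-n+1$). So the task reduces to exhibiting a place $P$ of $\gS_1$ at which $\mathrm{ord}_P\!\big(y\,h(x)\big)$ is coprime to $n^2-n+1$. The key arithmetic input is Lemma \ref{lemmasegovia1}: from \eqref{segoviaeq1} we have $x^{n^2}-x=(x^n+x)h(x)=y^{n+1}h(x)$, so $y\,h(x) = (x^{n^2}-x)/y^{n}$. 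Now pick a zero $Q$ of $x$ in $K(x)$; over it, since $x$ is a simple root of $x^n+x$, the place is totally ramified in $\gS_1$ with $\mathrm{ord}_P(x)=n+1$ and $\mathrm{ord}_P(y)=1$ for the unique place $P$ of $\gS_1$ above $Q$. Then $\mathrm{ord}_P(x^{n^2}-x)=\mathrm{ord}_P(x)=n+1$ and $\mathrm{ord}_P(y^n)=n$, giving $\mathrm{ord}_P\!\big(y\,h(x)\big)=n+1-n=1$, which is trivially coprime to $n^2-n+1$. (One should also note $p\nmid n^2-n+1$, which holds since $n\equiv 0$ or $n^2-n+1\equiv 1\pmod p$.)

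**Conclusion and the main obstacle.** With $\mathrm{ord}_P(y\,h(x))=1$ at that place, the radical extension $\gS_1(z)/\gS_1$ of degree $n^2-n+1$ is irreducible, the field of constants stays $K$ (a ramified place of degree prime to the extension degree forces this), and $K(\cX)$ is a function field of one variable over $K$, i.e.\ $\cX$ is an irreducible curve over $\K$. I would finish by remarking that this function field indeed corresponds to the complete intersection $\gS\cap\cC$ as defined — one needs that the affine model given by \eqref{segovia2} and \eqref{segovia4} together generate $K(\cX)$ and that no extra components hide at infinity, which is fine because $\cX$ has a unique point $X_\infty$ at infinity (already noted in the text). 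The main obstacle is purely bookkeeping: one must be careful that the ramification behaviour of $x$ and $y$ at the chosen place is exactly as claimed, and that $h(x)$ contributes no compensating pole or zero there — this is where identity \eqref{segoviaeq1} is doing the real work, replacing the mysterious polynomial $h(x)$ by the transparent quantity $(x^{n^2}-x)/y^n$ whose valuation is immediate.
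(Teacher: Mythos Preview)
Your proof is correct and takes a genuinely different route from the paper's. The paper argues geometrically: it invokes Lemma~\ref{lemmasegovia2} to place $\cX$ on the Hermitian surface $\cH$, picks an irreducible component $\cY$, and uses the tangent plane $\pi_P$ to $\cH$ at a non-singular point $P\in\cY$ to force $I(P,\cY\cap\pi_P)\ge n^3$; combining this with the Frobenius image $\varphi(P)\in\pi_P$ and with the observation that no line of the cone $\cC$ lies on $\cH$, both remaining cases ($\deg\cY=n^3$ and $\cY\subset\pi_P$) are ruled out. Your argument instead builds $\K(\cX)$ as the tower $\K(x)\subset\K(x,y)\subset\K(x,y,z)$ and checks irreducibility at each stage via the standard Kummer criterion, the decisive computation being $\mathrm{ord}_P\big(yh(x)\big)=1$ at the totally ramified place over $x=0$, obtained cleanly from the identity~\eqref{segoviaeq1}. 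Your approach is more elementary and self-contained (it does not need Lemma~\ref{lemmasegovia2} or any intersection theory), and it is the natural one for curves presented as towers of radical extensions; the paper's approach, on the other hand, exploits and reinforces the Hermitian-surface embedding that drives the rest of the article. Your closing remark that the unique point $X_\infty$ at infinity excludes hidden projective components is the right way to pass from the irreducible affine model to the complete intersection $\cX$.
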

\begin{proof} Let $\cY$ be an irreducible component of $\cX$ defined over $\K$. Let
$\K(\cY)$ be the function field of $\cY$. Let $x,y,z,t\in \K(\cY)$
be the coordinate functions of the embedding of $\cY$ in
$\PG(3,\K)$. Since $\cY$ lies on $\cH$,
\begin{equation}
\label{segoviaeq4} x^{n^3}+x-y^{n^3+1}-z^{n^3+1}=0.
\end{equation}
Take a non-singular affine point $P=(x_P,y_P,z_P)$ on $\cY$, and
let $\xi=x-x_P$, $\eta=y-y_P,\,\zeta=z-z_P$. From
(\ref{segovia3}),
$$\xi-\eta y_P^{n^3}-\zeta z_P^{n^3}=-\xi^{n^3}+\eta^{n^3}y_P+\eta^{n^3+1}+\zeta^{n^3}z_P+\zeta^{n^3+1},$$
whence $$v_P(\xi-\eta y_P^{n^3}-\zeta z_P^{n^3})\geq n^3,$$ where,
as usual, $v_P(u)$ with $u\in K(\cX)\setminus 0$ stands for the
valuation of $u$ at $P$.

Since the tangent plane $\pi_P$ to $\cH$ at $P$ has equation
$$X-x_P-y_P^{n^3}(Y-y_p)-z_P^{n^3}(Z-z_P)=0,$$
the intersection number $I(P,\cY\cap \pi_P)$ is at least $n^3$.
Therefore, if $\cX\neq \cY$, then either $\deg\, \cY=n^3$ or $\cY$
lies on $\pi$. Since the equation of $\pi_P$ may also be written
as
\begin{equation}
\label{segovia5} X-y_P^{n^3}Y-z_P^{n^3}Z+x_P^{n^3}=0,
\end{equation}
and $$x_P^{n^3}+x_P-y_P^{n^3+1}-z_P^{n^3+1}=0$$ implies that
$$x_P^{n^6}+x_P^{n^3}-y_P^{n^6+n^3}-z_P^{n^6+n^3}=0,$$ we see that
the point, the so-called Frobenius image of $P$,
$$\varphi(P)=(x_P^{q^2},y_P^{q^2},z_P^{q^2})$$ also lies on
$\pi_P$.

Now, in the former case, $\cX$ splits into $\cY$ and a line. In
particular, $\cY$ is defined over $\fqq$. Now, if the above point
is not defined over $\fqq$, that is $P\in \cY$ but $P\in
\PG(3,\K)\setminus \PG(3,\fqq)$, then the point $\varphi(P)$ of
$\cY$ is distinct from $P$. Also, $\pi_P$ contains $\varphi(P)$.
From  this, the intersection divisor of $\cY$ cut out by $\pi$ has
degree bigger than $n^3$; a contradiction with $\deg\, \cY=n^3$.

It remains to consider the case where $\cY$ lies on $\pi$ for
every non-singular affine point $P$. Since the tangent planes to
$\cH$ at distinct points of $\cX$ are distinct,  $\cY$ must be a
line lying on $\cH$. But this contradicts the fact that the lines
of $\cC$ contain the vertex of $\cC$ which is a point outside
$\cH$.
\end{proof}
{}From \cite{korchmaros-torres2001} and Theorem \ref{segoviath},
$\cX$ is a non-singular curve, and the linear series
$|qP+\varphi(P)|$ with $P\in \cX$ is cut out by the planes of
$\PG(3,\K)$.
\begin{theorem}
\label{segoviath1} $\cX$ has genus  $g=\ha\,(n^3+1)(n^2-2)+1$.
\end{theorem}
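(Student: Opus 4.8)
The plan is to realize $\cX$ as a tamely ramified cyclic cover of the Hermitian curve and then to apply the Riemann--Hurwitz genus formula. Let $\cH_0$ be the Hermitian curve, i.e.\ the non-singular plane curve of degree $n+1$ with affine equation $X^{n}+X=Y^{n+1}$, so that $g(\cH_0)=\ha\,n(n-1)$ and $\cH_0$ has a unique point at infinity $Q_\infty$, at which $v_{Q_\infty}(x)=-(n+1)$ and $v_{Q_\infty}(y)=-n$. The coordinate functions $x,y$ of $\cX$ generate $K(\cH_0)$ --- geometrically, the map $\cX\to\cH_0$, $(x,y,z)\mapsto(x,y)$, is the projection of $\PG(3,\K)$ from the vertex of the cone $\cC$ --- and $K(\cX)=K(\cH_0)(z)$ with
\[
z^{m}=y\,h(x),\qquad m=n^{2}-n+1 .
\]
Since $n\equiv0\pmod p$ gives $m\equiv1\pmod p$, we have $\gcd(m,p)=1$, so this cover will be tamely ramified.

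The key step is to compute $\div\bigl(y\,h(x)\bigr)$ on $\cH_0$; this will also show that the cover has degree exactly $m$. By \eqref{segoviaeq1}, $h(X)$ divides the squarefree polynomial $X^{n^{2}}-X$, so $h(X)$ is squarefree of degree $n^{2}-n$ and $\gcd\bigl(h(X),X^{n}+X\bigr)=1$; in particular $0$ is not a root of $h$. On $\cH_0$ the function $y$ has its only pole at $Q_\infty$, with $\div(y)_\infty=n\,Q_\infty$, while its zeros lie over the $n$ roots of $X^{n}+X$, one over each; by the degree count these $n$ zeros are simple. Likewise $h(x)$ has its only pole at $Q_\infty$, with $\div(h(x))_\infty=(n^{2}-n)(n+1)\,Q_\infty=(n^{3}-n)\,Q_\infty$; its zeros lie over the $n^{2}-n$ roots $\beta$ of $h$, and for each such $\beta$ one has $y^{n+1}=\beta^{n}+\beta\neq0$, so there are exactly $n+1$ points of $\cH_0$ over $\beta$ and the degree count again forces all $n^{3}-n$ of these zeros to be simple. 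The two sets of zeros are disjoint (the roots of $X^{n}+X$ are not roots of $h$), so
\[
\div\bigl(y\,h(x)\bigr)=D-n^{3}\,Q_\infty ,
\]
where $D$ is a reduced effective divisor of degree $n+(n^{3}-n)=n^{3}$. Since $y\,h(x)$ thus has a zero of order $1$, $T^{m}-y\,h(x)$ is irreducible over $K(\cH_0)$ and $[K(\cX):K(\cH_0)]=m$.

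Riemann--Hurwitz now finishes the argument. At each of the $n^{3}$ points of $\supp D$ the valuation of $y\,h(x)$ equals $1$, which is coprime to $m$, so such a point is totally ramified in the cover; at $Q_\infty$ the valuation equals $-n^{3}$, and $\gcd(n^{3},m)=1$ because $m\mid n^{3}+1$ (indeed $n^{3}+1=(n+1)m$), so $Q_\infty$ is totally ramified as well; every other place is unramified. Being tame, the cover has different of degree $(n^{3}+1)(m-1)$, whence
\[
2g(\cX)-2=m\bigl(2g(\cH_0)-2\bigr)+(n^{3}+1)(m-1)=(n^{2}-n+1)(n^{2}-n-2)+(n^{3}+1)(n^{2}-n).
\]
A brief expansion shows the right-hand side equals $(n^{3}+1)(n^{2}-2)$, which gives $g(\cX)=\ha\,(n^{3}+1)(n^{2}-2)+1$. (As an independent check, $\cX$ is a smooth complete intersection of the surfaces $\gS$ and $\cC$ of degrees $n^{2}-n+1$ and $n+1$, so adjunction gives $2g(\cX)-2=\bigl((n^{2}-n+1)+(n+1)-4\bigr)\deg\cX=(n^{2}-2)(n^{3}+1)$ directly.) The genuinely delicate point is the divisor computation of the second paragraph --- getting the pole orders of $x$ and $y$ at $Q_\infty$ right and checking that every zero of $y\,h(x)$ is simple; once that is in place, the genus drops out of the single Riemann--Hurwitz line above.
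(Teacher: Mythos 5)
Your proof is correct, and it computes the genus via the same underlying structure as the paper --- the degree-$(n^2-n+1)$ tame cover $\cX\to\cH_0$ of the Hermitian curve, totally ramified at exactly $n^3+1$ points, followed by the identical Riemann--Hurwitz computation $2g-2=(n^2-n+1)(n^2-n-2)+(n^3+1)(n^2-n)=(n^3+1)(n^2-2)$. The difference is in how the ramification data is justified. The paper works top-down: it exhibits the deck group $\Lambda$ of automorphisms $(X,Y,Z)\mapsto(X,Y,\lambda Z)$, $\lambda^{n^2-n+1}=1$, observes that their common fixed locus is the plane $Z=0$, and counts the $n^3+1$ fixed points geometrically using the fact that this plane contains no tangent to $\cX$; the quotient $\cX/\Lambda$ is then identified with the Hermitian curve. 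You work bottom-up: you build $K(\cX)$ as the Kummer extension $z^{n^2-n+1}=y\,h(x)$ of $K(\cH_0)$ and read off the ramification from the divisor $\div(y\,h(x))=D-n^3Q_\infty$ with $D$ reduced of degree $n^3$. Your route buys a little more: the divisor computation simultaneously proves that $T^{n^2-n+1}-y\,h(x)$ is irreducible (so you do not need to presuppose the irreducibility of $\cX$, which the paper establishes separately in Lemma \ref{segoviairr}), and it avoids the geometric input that $\pi_0$ meets $\cX$ transversally. One caveat on your parenthetical adjunction check: it requires knowing that the complete intersection $\Sigma\cap\cC$ is non-singular, which the paper only obtains \emph{after} maximality via \cite{korchmaros-torres2001}, so it should not be promoted beyond a consistency check; your main argument does not depend on it.
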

\begin{proof} Every linear collineation $(X,Y,Z)\to
(X,Y,\lambda\,Z)$ with $\lambda^{n^2-n+1}=1$ preserves both $\gS$
and $\cC$. For $\lambda\neq 1$, the fixed points of such a
collineation $g_{\lambda}$ are exactly the points of the plane
$\pi_0$ with equation $Z=0$. Since $\pi_0$ contains no tangent to
$\cX$, the number of fixed points of $g_{\lambda}$ with
$\lambda\neq 1$ is independent from $\lambda$ and equal to
$n^3+1$.

The above collineation $g_{\lambda}$ defines an automorphism of
$\cX$. Let $\Lambda$ be the group consisting of all these
automorphisms. Since $p\nmid |\Lambda|$, the Hurwitz genus formula
gives
$$2g-2=(n^2-n+1)(2\bar{g}-2)+(n^3+1)(n^2-n),$$
where $\bar{g}$ is the genus of the quotient curve
$\cY=\cX/\Lambda$. From the definition of $\cX$ and $\Lambda$,
this quotient curve $\cY$ is the complete intersection of $\cC$
and the rational surface of equation $Z=Yg(X)$. This shows that
$\cY$ is birationally equivalent to the Hermitian curve of
equation $X^n+X=Y^{n+1}$. Since the latter curve has genus
$\ha\,(n^2-n)$, we find that $\bar{g}=\ha\,(n^2-n)$. Now, from the
above equation, $2g-2=(n^3+1)(n^2-2)$ whence the assertion
follows.
\end{proof}

\section{$\fqq$-coverings of the Hermitian curves}
We show that if $q>8$ then $\cX$ is not $\fqq$-covered by any of
the curves (A),(B),(C),(D). Actually, this holds trivially for
(B),(C),(D), as the genus of each of the latter three curves is
smaller than the genus of $\cX$. Therefore, we only need to prove
the following result.
    \begin{proposition}
    \label{lemmacov} If $q>8$, then $\cX$ is not $\fqq$-covered by the Hermitian
    curve defined over $\fqq$.
    \end{proposition}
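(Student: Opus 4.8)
The plan is to assume that an $\fqq$-covering $\phi\colon \cH_q\to\cX$ exists, where $\cH_q$ denotes the Hermitian curve over $\fqq$ (with $q=n^3$), and to derive a contradiction by playing a Riemann--Hurwitz bound on the degree $d=\deg\phi$ against a count of $\fqq$-rational points. The attractive feature of this route is that it uses nothing about the Galois nature of $\phi$: only that $\phi$ is a non-constant $\fqq$-morphism of degree $d$, so the argument automatically covers the non-Galois case as well. As a harmless preliminary one reduces to $\phi$ separable: the purely inseparable part of $\phi$ is, up to an $\fqq$-isomorphism of $\cH_q$ (which is defined over $\mathbb{F}_p$), a power of Frobenius, and this alters neither the genus nor the number of $\fqq$-rational points, hence may be discarded.

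First I would bound $d$. The Hermitian curve $\cH_q$ has genus $g_{\cH_q}=\tfrac12 q(q-1)$, so $2g_{\cH_q}-2=(n^3-2)(n^3+1)$, while by Theorem~\ref{segoviath1} the genus $g$ of $\cX$ satisfies $2g-2=(n^3+1)(n^2-2)$. Since the different of the separable cover $\phi$ has non-negative degree, Riemann--Hurwitz gives $d(2g-2)\le 2g_{\cH_q}-2$, that is,
\[
d\ \le\ \frac{n^3-2}{n^2-2}\ =\ n+\frac{2n-2}{n^2-2}\ <\ n+1,
\]
where the last inequality uses $n\ge 3$, which is exactly the hypothesis $q>8$. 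Hence $d\le n$.

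Next I would count rational points. Both curves are maximal --- $\cH_q$ by definition and $\cX$ by Theorem~\ref{segoviath} --- so their numbers of $\fqq$-rational points are forced: $|\cH_q(\fqq)|=q^2+1+2g_{\cH_q}q=q^3+1=n^9+1$, and, using Theorem~\ref{segoviath1}, $|\cX(\fqq)|=q^2+1+2gq=n^8-n^6+n^5+1$. Every $\fqq$-rational point of $\cH_q$ maps under $\phi$ to an $\fqq$-rational point of $\cX$, and each point of $\cX$ has at most $d$ preimages, so $|\cH_q(\fqq)|\le d\,|\cX(\fqq)|$. Combining this with $d\le n$ yields
\[
n^9+1\ \le\ n\bigl(n^8-n^6+n^5+1\bigr)\ =\ n^9-n^7+n^6+n,
\]
i.e.\ $(n-1)(n^6-1)\le 0$, which is false for every $n\ge 2$. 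This contradiction proves the proposition.

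As for where the real content sits: the computation is routine, and the point is simply that the two bounds just barely cross, so the delicate step is ruling out a covering of degree exactly $n$. That is handled by the positive margin $(n-1)(n^6-1)$ in the point count, which is comfortable for $n\ge 3$ but collapses for small $n$ --- consistent with the case $q=8$ remaining open, where in addition Riemann--Hurwitz only gives $d\le 3$. The two places I would re-check carefully are the separability reduction and the explicit evaluation of $|\cX(\fqq)|$ from Theorem~\ref{segoviath1}, since these are the only spots where an arithmetic slip could hide.
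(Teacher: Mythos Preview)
Your proof is correct and follows essentially the same two-step strategy as the paper: bound the degree by Riemann--Hurwitz (using $2g_{\cH_q}-2=(n^3-2)(n^3+1)$ and $2g-2=(n^3+1)(n^2-2)$ to get $d\le n$ for $n\ge 3$), then derive a contradiction from the $\fqq$-rational point count. Your version is in fact a little more careful than the paper's, since you make the separability reduction explicit before invoking Riemann--Hurwitz and you push the final inequality to the clean form $(n-1)(n^6-1)\le 0$.
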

\begin{proof} Assume on the contrary that $\cX$ is $\fqq$-covered  by the Hermitian curve $\cH_q$ over
$\fqq$.
Let $m$ denote the degree of such a covering $\varphi$. Since
$\cH_q$ has genus $\ha\,q(q-1)=\ha\,n^3(n^3-1)$, the Hurwitz genus
formula applied to $\varphi$ gives:
$$n^6-n^3-2\geq m(n^3+1)(n^2-2).$$
This yields that $m\leq n$ for $n>2$.

On the other hand, each of the $q^3+1=n^9+1$ $\fqq$-rational point
of $\cH_q$ lies over an $\fqq$-rational point of $\cX$ and the
number of $\fqq$-rational points of $\cH_q$ lying over a given
$\fqq$-rational points of $\cX$ is at most $m$. Since $\cX$ has
exactly $n^8-n^6+n^3+1$ $\fqq$-rational points, this gives:
$$n^9+1\leq m(n^8-n^6+n^5+1).$$ For this $m>n$, a contradiction.
\end{proof}

\section{Automorphism group over $\fqq$}
Let $\aut(\cX)$ be the $\fqq$-automorphism group of $\cX$. In
terms of the associated function field, $\aut(\cX)$ is the group
of all automorphisms of $\K(\cX)$ which fixes every element in the
subfield $\fqq$ of $\K$.

First we point out that $\aut(\cX)$ contains a subgroup isomorphic
to the special unitary group $\SU(3,n)$. This requires to lift
$\SU(3,n)$ to a collineation group of $\PG(3,q^2)$.

If the non-degenerate Hermitian form in the three dimensional
vector space $V(3,n^2)$ over $\fns$ is given by
$X^nT+XT^n-Y^{n+1}$ then $\SU(3,n)$ is represented by the matrix
group of order $(n^3+1)n^3(n^2-1)$ generated by the following
matrices:

For $a,b\in \fns$ such that $a^n+a-b^{n+1}=0$, and for $k\in
\fns,\, k\neq 0$,
    $$ Q_{(a,b)}=
\left( \begin{array}{ccccc} 1 & b^n & a
\\ 0 & 1 &   b \\
0 & 0 & 1
\end{array} \right),\,H_k=
\left( \begin{array}{ccccc} k^{-n} & 0 & 0 \\ 0 & k^{n-1} & 0 \\
0 & 0 & k
\end{array} \right),\,
\, W=\left(\begin{array}{ccccc}  0 & 0 & 1 \\ 0 & -1& 0\\
1& 0 & 0
\end{array} \right)\, .
   $$
The subgroup of $\SU(3,n)$ consisting of its scalar matrices
$\lambda I$, with  $\lambda\in \fns$  is either trivial or has
order $3$ according as $\gcd(3,n+1)$ is either $1$ or $3$.

{}From each of the above matrices a $4\times 4$-matrix arises by
adding $0,0,1,0$ as a third row and as a third column. If
$\tilde{Q}_{(a,b)},\,\tilde{H}_k, \tilde{W}$ are the $4\times4$
matrices obtained in this way, the matrix group T generated by
them is isomorphic to $\SU(3,n)$.

By the same lifting procedure, each $3\times 3$ diagonal matrix
$\lambda I$ defines a $4\times 4$ diagonal matrix
$\tilde{D_{\lambda}}$ with diagonal $[\lambda,\lambda,1,\lambda]$.
If $\lambda$ ranges over the set of all $(n^2-n+1)$--st roots of
unity, the matrices $\tilde{D}_{\lambda}$ form a cyclic group
$C_{n^2-n+1}$. Obviously, $\tilde{D}_{\lambda}$ commutes with
every matrix in $T$, and hence the group $M$ generated by $T$ and
$C_{n^2-n+1}$ is $TC_{n^2-n+1}$. Here, $T\cap C_{n^2-n+1}$ is
either trivial or a subgroup of order $3$, according as
$\gcd(3,n+1)=1$ or $\gcd(3,n+1)=3$. In the latter case, let
$C_{(n^2-n+1)/3}$ be the subgroup of $C_{n^2-n+1}$ of index $3$.
Note that if $\gcd(3,n+1)=3$ then $9\nmid (n^2-n+1)$. Therefore,
$M$ can be written as a direct product, namely
$$M=
\begin{cases} T\times C_{n^2-n+1}\,\qquad {\mbox{when}}\quad \gcd(3,n+1)=1;\\
T\times C_{(n^2-n+1)/3}\quad {\mbox{when}}\quad
\gcd(3,n+1)=3.\\
\end{cases}
$$

In $\PG(3,q^2)$ equipped with homogeneous coordinates $(X,Y,Z,T)$,
every regular $4\times 4$ matrix defines a linear collineation,
and two such matrices define the same linear collineation if and
only if one is a multiple of the other. Since both third row and
column in each of the above matrices is $0,0,1,0$, the group $M$
can be viewed as a collineation group of $\PG(3,q^2)$. Our aim is
to prove that $M$ preserves $\cX$. This will be done in two steps.
\begin{lemma}
\label{lemma1group} The group $T$ preserves $\cX$.
\end{lemma}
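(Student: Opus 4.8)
The plan is to show that the three types of generating matrices $\tilde Q_{(a,b)}$, $\tilde H_k$, $\tilde W$ of $T$ each carry $\cX$ to itself; since $\cX$ is irreducible (Lemma~\ref{segoviairr}) and each of these is a linear collineation of $\PG(3,q^2)$, it suffices to check that each maps the set of points of $\cX$ into the variety $\cX$, equivalently that the defining equations \eqref{segovia2} and \eqref{segovia4} are preserved (up to the ideal they generate). It is technically cleaner to work projectively: rewrite $\cC$ as $X^nT+XT^n=Y^{n+1}$ and $\gS$ as $Z^{n^2-n+1}=Y\,H(X,T)$, where $H(X,T)=\sum_{i=0}^n(-1)^{i+1}X^{i(n-1)}T^{(n-i)(n-1)}$ is the homogenization of $h$, so that \eqref{segoviaeq1} becomes the projective identity $X^{n^2}T-XT^{n^2}=(X^nT+XT^n)H(X,T)$. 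Throughout one uses that all points of $\cX$ satisfy both equations, and that $T$ is by construction the lift of $\SU(3,n)$ fixing the Hermitian form $X^nT+XT^n-Y^{n+1}$, so preservation of \eqref{segovia4} is automatic for each generator. Hence the real content is preservation of \eqref{segovia2}.

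First I would dispatch the two easy generators. For $\tilde H_k$ the action is $(X,Y,Z,T)\mapsto(k^{-n}X,k^{n-1}Y,Z,kT)$; dehomogenizing ($T=1$ is not fixed, so one must keep $T$) one computes $H(k^{-n}X,kT)=k^{-n(n-1)\cdot\text{(mixed)}}\cdots$, and the point is that $H$ is homogeneous of degree $n(n-1)$ in $(X,T)$ jointly — indeed each monomial $X^{i(n-1)}T^{(n-i)(n-1)}$ has total degree $n(n-1)$ — so $H(k^{-n}X,kT)=k^{?}H(X,T)$ with a single exponent, and then one checks $Z^{n^2-n+1}=k^0 Z^{n^2-n+1}$ matches $k^{n-1}Y\cdot k^?H(X,T)$; the weights were chosen in $\SU(3,n)$ precisely so that $n^2-n+1$, $n-1$, and the grading of $H$ fit together, so this comes out to an identity. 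For $\tilde W$, the action swaps $X\leftrightarrow T$ and sends $Y\mapsto -Y$, $Z\mapsto Z$; here one needs the symmetry $H(T,X)=H(X,T)$, which is visible from the index substitution $i\mapsto n-i$ in the homogenized sum (possibly up to the sign $(-1)^{n+1}$, absorbed by $Y\mapsto -Y$), together with the fact that $X^nT+XT^n$ is symmetric. So \eqref{segovia2} is preserved by $\tilde W$ as well.

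The main obstacle is the unipotent generator $\tilde Q_{(a,b)}$, acting by $X\mapsto X+b^nY+aT$, $Y\mapsto Y+bT$, $Z\mapsto Z$, $T\mapsto T$, where $a^n+a=b^{n+1}$. One must show that on $\cX$ one has $Z^{n^2-n+1}=(Y+bT)\,H(X+b^nY+aT,\,T)$, given $Z^{n^2-n+1}=Y\,H(X,T)$ and the cone equation. The route I would take: from Lemma~\ref{lemmasegovia2}, $\cX$ lies on the Hermitian surface $\cH\colon X^{n^3}+X=Y^{n^3+1}+Z^{n^3+1}$ (projectively $X^{n^3}T+XT^{n^3}=Y^{n^3+1}+Z^{n^3+1}$), and $T$ was defined as the $\SU(3,n)$ preserving the companion Hermitian form — but note $\cH$ involves the $n^3$-power form while the matrices preserve the $n$-power form; reconciling these is exactly where one uses that on $\cX$ the function $x^{n^3}+x$ equals both $y^{n^3+1}+z^{n^3+1}$ and, via \eqref{segoviaeq2} and \eqref{segovia4}, $(x^n+x)^{n^2-n+1}+ (x^n+x)h(x)^{n+1}$. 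Concretely, I would use the identity $z^{n^2-n+1}=yh(x)$ to write $z^{(n^2-n+1)(n+1)}=z^{n^3+1}=y^{n+1}h(x)^{n+1}=(x^n+x)h(x)^{n+1}=x^{n^3}+x-y^{n^3+1}$ (the last step is \eqref{segoviaeq2}); applying $\tilde Q_{(a,b)}$ to the right-hand side and using that $\cH$'s $n^3$-power Hermitian form \emph{is} preserved by the $\SU(3,n)$-matrices — because $u\mapsto u^{n^3}$ is a power of Frobenius so the matrix entries $b^n,a$ transform compatibly, $a^{n^3}+a^{n^3}\cdots$, and $a^n+a=b^{n+1}$ raised to the $n^2$ power still holds — one recovers that $z^{n^3+1}$ is unchanged and $y^{n^3+1}\mapsto (y+bt)^{n^3+1}$ consistently, then take $(n^2-n+1)$-st roots. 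The delicate point, and where I expect to spend the real effort, is controlling these root extractions: $z^{n^3+1}$ determines $z^{n^2-n+1}$ only up to an $(n+1)$-st root of unity, so one must argue that the correct branch is selected, e.g. by specializing at $b=0$ (where $\tilde Q_{(a,0)}$ fixes $Y$ and $Z$ and only translates $X$ by $aT$ with $a^n+a=0$, making $h(x)$ invariant by \eqref{segoviaeq1}) and then varying $b$ continuously/algebraically, or by checking the identity directly as polynomials modulo the ideal $(Z^{n^2-n+1}-YH(X,T),\,X^nT+XT^n-Y^{n+1})$ in $\fns[X,Y,Z,T]$ using Lemma~\ref{lemmasegovia1}. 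Once $\tilde Q_{(a,b)}$, $\tilde H_k$, $\tilde W$ are all shown to stabilize the point set of the irreducible curve $\cX$, the group $T$ they generate preserves $\cX$, which is the claim.
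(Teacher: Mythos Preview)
Your treatment of $\tilde H_k$ and $\tilde W$ is fine and matches the paper's (the paper does $\tilde W$ exactly as you sketch, and leaves $\tilde H_k$ to the reader). The weight check for $\tilde H_k$ works once you use $k^{n^2-1}=1$: the exponent of $k$ on the $i$-th monomial of $H(k^{-n}X,kT)$ is $(n-1)(n-i(n+1))\equiv n(n-1)\pmod{n^2-1}$, independent of $i$, and then $k^{n-1}\cdot k^{n(n-1)}=k^{n^2-1}=1$.

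The gap is in your handling of $\tilde Q_{(a,b)}$. Your route through $\cH$ does correctly give $(y_1h(x_1))^{n+1}=(yh(x))^{n+1}$, but the $(n+1)$-st root ambiguity is real and neither of your proposed fixes closes it. ``Varying $b$ continuously'' is not available over $\bar{\mathbb F}_p$; and ``checking directly modulo the ideal using Lemma~\ref{lemmasegovia1}'' is exactly what has to be done, but you have not found the key identity that makes it work. The paper's argument is far simpler than anything via $\cH$: on $\cX$, away from the finitely many points with $x^n+x=0$, identity~\eqref{segoviaeq1} together with the cone equation~\eqref{segovia4} gives
\[
yh(x)=y\cdot\frac{x^{n^2}-x}{x^n+x}=y\cdot\frac{(y^{n+1})^n-y^{n+1}}{y^{n+1}}=y^{n^2}-y.
\]
Since $b\in\fns$ one has $(y+b)^{n^2}-(y+b)=y^{n^2}-y$, and the same computation run backwards (using $x_1^n+x_1=y_1^{n+1}$, which you already noted) yields $y_1h(x_1)=y_1^{n^2}-y_1$. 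Hence $y_1h(x_1)=yh(x)=z^{n^2-n+1}$ on the nose, with no root extraction needed. This is the missing idea; once you see that $yh(x)$ collapses to $y^{n^2}-y$ on $\cX$, invariance under the whole unipotent radical is immediate.
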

\begin{proof} Let $P=(x,y,z,1)\in \cX$. The image of $P$ under $\tilde{Q}_{(a,b)}$ is
$(x_1,y_1,z,1)$ with $x_1=x+b^ny+a,\,y_1=y+b$. From
(\ref{segovia4}),
\begin{equation}
\label{lemma1groupeq1} x_1^n+x_1=y_1^{n+1}.
\end{equation}
Furthermore, if $x^n+x\neq 0$, then by (\ref{segoviaeq1})
$$yh(x)=y\,\frac{x^{n^2}-x}{x^n+x}=y\,\frac{(x^n+x)^n-(x^n+x)}{x^n+x}=y\,\frac{y^{(n+1)n}-y^{n+1}}{y^{n+1}}=-y+y^{n^2}.$$
Since $b\in \fns$, this implies that $yh(x)=y_1(y_1^{n^2-1}-1)$.
On the other hand, from (\ref{lemma1groupeq1}),
$$y_1^{n^2-1}=(x_1^n+x_1)^{n-1}.$$ Therefore, if $x_1^n+x_1\neq 0$, then
$$yh(x)=y_1((x_1^n+x_1)^{n-1}-1)=y_1\left(\frac{(x_1^n+x_1)^n}{x_1^n+x_1}-1\right)=y_1h(x_1).$$
Since $x^n+x=0$ only holds for finitely many of points of $\cX$,
and the same holds for $x_1^n+x_1=0$, this implies that
$\tilde{Q}_{(a,b)}\in \aut (\cX)$.

Similar calculation works for $\tilde{H}_k$ showing that
$\tilde{H}_k\in \aut(\cX)$.

To deal with $\tilde{W}$, homogeneous coordinates are needed. Note
that (\ref{segovia4}) reads $X^nT+XT^n=Y^{n+1}$ in homogeneous
coordinates.  Let $P=(x,y,z,t)$ be a point of $\cX$. Then the
image of $P$ is the point $P'=(t,-y,z,x)$.  Since
$x^nt+xt^n=t^nx+tx^n$ and $x^nt+xt^n-y^{n+1}=0$, we have that
$P'\in \cC$. Further, if $x^n+xt^{n-1}\neq 0$ and $t\neq 0$, then
$$yh(x)=y\,\frac{x^{n^2}-xt^{n^2-1}}{x^n+xt^{n-1}}=-y\,\frac{t^{n^2}-tx^{n^2-1}}{t^n+tx^{n-1}}=-yh(t).$$
From this $\tilde{W}\in \aut(\cX)$, as $x^n+xt^{n-1}=0$ and $t=0$
only hold for finitely many points of $\cX$.
\end{proof}
\begin{lemma}
\label{lemma2group} The group $C_{n^2-n+1}$ preserves $\cX$.
\end{lemma}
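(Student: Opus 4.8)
The plan is to show directly that the generator $\tilde D_{\lambda}$ of $C_{n^2-n+1}$ maps $\cX$ to itself, just as was done for the generators of $T$ in Lemma~\ref{lemma1group}. Since $\tilde D_{\lambda}$ is the lift of the $3\times 3$ scalar matrix $\lambda I$, its action on homogeneous coordinates $(X,Y,Z,T)$ is $(X,Y,Z,T)\mapsto(\lambda X,\lambda Y,Z,\lambda T)$, i.e. in affine coordinates (where $t=1$ is rescaled back to $1$) it is the collineation $g_{\lambda}\colon(X,Y,Z)\mapsto(X,Y,\lambda^{-1}Z)$ already met in the proof of Theorem~\ref{segoviath1}, with $\lambda^{n^2-n+1}=1$. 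So the content of the lemma is exactly the observation recorded there, and I would simply verify it against the two defining equations~(\ref{segovia2}) and~(\ref{segovia4}).

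First I would note that equation~(\ref{segovia4}), $X^n+X=Y^{n+1}$ (homogeneously $X^nT+XT^n=Y^{n+1}$), involves neither $Z$ nor $\lambda$, hence is obviously invariant under $g_{\lambda}$. Then I would turn to~(\ref{segovia2}), $Z^{n^2-n+1}=Yh(X)$: if $P=(x,y,z)$ is an affine point of $\cX$, its image is $(x,y,\lambda^{-1}z)$ (up to the harmless rescaling), and $(\lambda^{-1}z)^{n^2-n+1}=\lambda^{-(n^2-n+1)}z^{n^2-n+1}=z^{n^2-n+1}=yh(x)$ because $\lambda$ is an $(n^2-n+1)$-st root of unity. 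Thus the image point again satisfies~(\ref{segovia2}), so it lies on $\cX$; since $g_{\lambda}$ has finite order and maps the irreducible curve $\cX$ (Lemma~\ref{segoviairr}) onto an irreducible curve contained in $\cX$, it preserves $\cX$. As $\lambda$ ranges over all $(n^2-n+1)$-st roots of unity, these collineations run through $C_{n^2-n+1}$, proving the lemma.

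The only subtlety worth spelling out is bookkeeping with homogeneous versus affine coordinates: the matrix $\tilde D_{\lambda}$ has diagonal $[\lambda,\lambda,1,\lambda]$, and one must be careful that after dehomogenising (dividing through by the last coordinate $\lambda$) the effect on the affine plane is the collineation fixing $X$ and $Y$ and scaling $Z$ by $\lambda^{-1}$ — equivalently, up to the scalar ambiguity of projective matrices, one may instead use the diagonal $[1,1,\lambda^{-1},1]$, which makes the computation above transparent. I do not expect any genuine obstacle here; unlike Lemma~\ref{lemma1group}, there are no denominators to clear and no exceptional points $x^n+x=0$ to handle separately, since the verification is a single monomial identity valid at every point. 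Having established both Lemma~\ref{lemma1group} and Lemma~\ref{lemma2group}, one concludes that the whole group $M=TC_{n^2-n+1}$ preserves $\cX$, which is the goal announced before Lemma~\ref{lemma1group}.
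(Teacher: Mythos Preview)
Your argument is correct and is precisely the ``straightforward computation'' the paper alludes to: you verify that the diagonal collineation $\tilde D_{\lambda}$ fixes $X,Y$ and scales $Z$ by $\lambda^{-1}$, leaving (\ref{segovia4}) untouched and preserving (\ref{segovia2}) because $\lambda^{n^2-n+1}=1$. This is exactly the paper's approach, just written out in full.
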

\begin{proof} A straightforward computation shows the assertion.
\end{proof}
Lemmas \ref{lemma1group} and \ref{lemma2group} have the following
corollary.

\begin{theorem}
\label{th1group} $\aut (\cX)$ contains a subgroup $M$ such that
$$M\cong
\begin{cases} \SU(3,n)\times C_{n^2-n+1}\,\qquad {\mbox{when}}\quad \gcd(3,n+1)=1;\\
\SU(3,n)\times C_{(n^2-n+1)/3}\quad {\mbox{when}}\quad
\gcd(3,n+1)=3.\\
\end{cases}
$$
\end{theorem}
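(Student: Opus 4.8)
The plan is to assemble Theorem~\ref{th1group} from the pieces already in place, so the proof is essentially an assembly argument with one group-theoretic verification. First I would invoke Lemmas~\ref{lemma1group} and~\ref{lemma2group}: the former gives $T\le\aut(\cX)$, the latter gives $C_{n^2-n+1}\le\aut(\cX)$, and since $\aut(\cX)$ is a group it contains the subgroup $M=\langle T,C_{n^2-n+1}\rangle$ they generate. The key point is then to identify $M$ abstractly. As already observed in the paragraph preceding Lemma~\ref{lemma1group}, each $\tilde D_\lambda\in C_{n^2-n+1}$ commutes with every matrix of $T$ (the diagonal entries $[\lambda,\lambda,1,\lambda]$ are scalar on the relevant $3\times3$ block and fix the extra coordinate), so $M=TC_{n^2-n+1}$ with both factors normal, and $M\cong (T\times C_{n^2-n+1})/(T\cap C_{n^2-n+1})$ amalgamated over the central intersection.

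Next I would pin down $T\cap C_{n^2-n+1}$. A matrix in $C_{n^2-n+1}$ is $\tilde D_\lambda$ with $\lambda^{n^2-n+1}=1$; for it to lie in $T\cong\SU(3,n)$ it must arise, as a collineation, from a scalar matrix of $\SU(3,n)$, i.e.\ from $\mu I$ with $\mu\in\fns$, $\mu^{n+1}=1$ and $\mu^3=1$ coming from $\det=1$ — concretely the scalar subgroup of $\SU(3,n)$ has order $\gcd(3,n+1)$. So $T\cap C_{n^2-n+1}$ is trivial when $\gcd(3,n+1)=1$ and has order $3$ when $\gcd(3,n+1)=3$; in the latter case I would record $9\nmid(n^2-n+1)$ (true because $n\equiv2\pmod 3$ gives $n^2-n+1\equiv3\pmod 9$), so the Sylow $3$-subgroup of $C_{n^2-n+1}$ has order exactly $3$ and $C_{n^2-n+1}=C_3\times C_{(n^2-n+1)/3}$ with $C_3=T\cap C_{n^2-n+1}$. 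Substituting into $M=(T\times C_{n^2-n+1})/(T\cap C_{n^2-n+1})$ then collapses the amalgamated $C_3$ on one side against the $C_3$ factor on the other, yielding $M\cong T\times C_{(n^2-n+1)/3}\cong\SU(3,n)\times C_{(n^2-n+1)/3}$; when $\gcd(3,n+1)=1$ the intersection is trivial and $M\cong\SU(3,n)\times C_{n^2-n+1}$ directly. This is exactly the decomposition of $M$ already displayed in the text, so at this point I would simply cite that discussion and the isomorphism $T\cong\SU(3,n)$ established there.

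The one genuine obstacle — everything else being bookkeeping — is making the identification $T\cap C_{n^2-n+1}\cong\langle$scalars of $\SU(3,n)\rangle$ airtight at the level of \emph{collineations} rather than matrices, since two $4\times4$ matrices give the same collineation precisely when they are proportional. Concretely one must check that if $\tilde D_\lambda$ and some product of the generators $\tilde Q_{(a,b)},\tilde H_k,\tilde W$ induce the same collineation of $\PG(3,q^2)$, then the $\SU(3,n)$-matrix in question is forced to be scalar; this follows because the fourth coordinate is fixed by all generators (the added row/column is $0,0,1,0$), so proportionality forces the proportionality constant to be $1$ on that coordinate and hence the $3\times3$ block of the $\SU(3,n)$-element to be scalar. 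Once that is noted, the orders computed above are literally those of the scalar subgroup of $\SU(3,n)$, and the proof is complete. I would close the argument with a one-line remark that, the factors of $M$ being normal and the amalgamated subgroup central, the displayed direct-product descriptions are immediate from the structure of $C_{n^2-n+1}$ recalled before Lemma~\ref{lemma1group}.

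\begin{proof}
By Lemmas~\ref{lemma1group} and~\ref{lemma2group}, both $T$ and $C_{n^2-n+1}$ are contained in $\aut(\cX)$, hence so is the group $M$ they generate. As observed before Lemma~\ref{lemma1group}, every matrix $\tilde D_\lambda\in C_{n^2-n+1}$ commutes with every matrix of $T$, so $M=TC_{n^2-n+1}$ with both factors normal in $M$, and $M\cong(T\times C_{n^2-n+1})/N$ where $N=T\cap C_{n^2-n+1}$ is central. A collineation induced by $\tilde D_\lambda$ coincides with one induced by a matrix of $T$ only if the two $4\times4$ matrices are proportional; since the fourth coordinate is fixed by all generators of $T$, this forces the scalar of proportionality to be $1$ there, and hence the corresponding element of $\SU(3,n)$ to be a scalar matrix $\mu I$. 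Thus $N$ is isomorphic to the group of scalar matrices in $\SU(3,n)$, which is trivial if $\gcd(3,n+1)=1$ and cyclic of order $3$ if $\gcd(3,n+1)=3$.

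If $\gcd(3,n+1)=1$ then $N=1$ and $M\cong T\times C_{n^2-n+1}\cong\SU(3,n)\times C_{n^2-n+1}$, using $T\cong\SU(3,n)$. If $\gcd(3,n+1)=3$ then, as noted in the text, $9\nmid(n^2-n+1)$, so $C_{n^2-n+1}=C_3\times C_{(n^2-n+1)/3}$ with $C_3=N$. Hence
$$M\cong\frac{T\times(C_3\times C_{(n^2-n+1)/3})}{N}\cong T\times C_{(n^2-n+1)/3}\cong\SU(3,n)\times C_{(n^2-n+1)/3},$$
the amalgamated copy of $C_3$ cancelling against the $C_3$ factor of $C_{n^2-n+1}$. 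This establishes both cases.
\end{proof}
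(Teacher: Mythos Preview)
Your proof is correct and follows the paper's own approach: the paper states Theorem~\ref{th1group} as an immediate corollary of Lemmas~\ref{lemma1group} and~\ref{lemma2group}, the structure of $M=TC_{n^2-n+1}$ as a direct product having already been worked out in the discussion preceding those lemmas, and you have simply written that discussion out in full (with a little extra care about matrices versus collineations). One small slip: the fixed coordinate is the \emph{third} one (the $Z$-coordinate, where the inserted row/column $0,0,1,0$ sits), not the fourth; the argument is unaffected.
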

Actually, $\aut(\cX)=M$ when $\gcd(3,n+1)=1$, but $\aut(\cX)$ is a
bit larger when $\gcd(3,n+1)=3$. To show this, the following bound
on $|\aut(\cX)|$ will be useful.
\begin{lemma}
\label{lemma3group} $|\aut(\cX)|\leq (n^3+1)n^3(n^2-1)(n^2-n+1).$
\end{lemma}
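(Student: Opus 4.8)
The plan is to bound $|\aut(\cX)|$ by combining structural information about curves with many automorphisms with the known data about $\cX$: its genus $g=\ha(n^3+1)(n^2-2)+1$, and the large $p$-subgroup and large subgroup $M$ already exhibited. First I would look at a Sylow $p$-subgroup $S_p$ of $\aut(\cX)$. The group $T\cong\SU(3,n)\le\aut(\cX)$ already contains a $p$-subgroup of order $n^3$ fixing the unique infinite point $X_\infty$; in fact the matrices $\tilde Q_{(a,b)}$ together with the diagonal part generate the full stabilizer of $X_\infty$ in $T$, of order $n^3(n^2-1)$, acting on $\cX$ with $X_\infty$ as its only fixed point. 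The key first step is to show that $\aut(\cX)$ fixes $X_\infty$, or at least that $X_\infty$ lies in a short orbit whose length is controlled. Since $\cX$ has exactly one point at infinity and lies on the Hermitian surface $\cH$, and the linear series $|qP+\varphi(P)|$ cut out by planes is intrinsic, any automorphism permutes the (finitely many) Weierstrass-type points associated with this geometry; I would argue that $X_\infty$ is distinguished — e.g.\ as the unique point of $\cX$ whose osculating behaviour, or whose stabilizer order, is extremal — so that $\aut(\cX)$ stabilizes $X_\infty$.

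Granting that $\aut(\cX)$ fixes $X_\infty$, the stabilizer $G=\aut(\cX)_{X_\infty}$ has a normal Sylow $p$-subgroup $S$ (this is the standard structure for the stabilizer of a point under a tame-free situation; here one uses that $X_\infty$ is totally ramified under $S$ and Hurwitz/Stichtenoth bounds). The next step is to bound $|S|$. Using the classification of the possible orders of $p$-subgroups of automorphism groups (the Nakajima/Stichtenoth-type bounds: if $|S|>\tfrac{p}{p-1}\cdot 2g$-type inequalities fail one gets strong constraints, and more precisely for a $p$-group fixing a point one has $|S|\le \frac{2pg}{p-1}$ unless the quotient has genus $0$ with few branch points), I would pin down $|S|$. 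The already-present subgroup of order $n^3$ inside $T$ shows $|S|\ge n^3$; the content is the reverse inequality $|S|\le n^3$, which should follow because $\cX/S$ must then have very small genus and the ramification of $X_\infty$ is forced, using $g\sim\ha n^5$ against $|S|$ roughly cubic.

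Once $|S|=n^3$ is established, consider $G/S$. It acts on the quotient $\cX/S$ and, more usefully, it acts faithfully on the tangent/cotangent data at $X_\infty$, hence embeds into a subgroup of the one-dimensional affine-type group, equivalently into $\K^*$ acting on the local parameter — so $G/S$ is cyclic of order prime to $p$. The ramification of $\cX\to\cX/G$ at $X_\infty$, plugged into Hurwitz, bounds $|G/S|$; the subgroup of $M$ stabilizing $X_\infty$ has order $n^3(n^2-1)(n^2-n+1)$, i.e.\ $|G/S|\ge (n^2-1)(n^2-n+1)$, and I would show this is also the maximum, giving $|G|=|G:S|\,|S|\le n^3(n^2-1)(n^2-n+1)$. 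Finally, to pass from $|G|=|\aut(\cX)_{X_\infty}|$ to $|\aut(\cX)|$ one multiplies by the orbit length of $X_\infty$; if the first step gave that $X_\infty$ is fixed then $\aut(\cX)=G$ and we get exactly the claimed bound $(n^3+1)n^3(n^2-1)(n^2-n+1)$ — note the extra factor $n^3+1$ in the statement suggests that rather than being fixed, $X_\infty$ lies in an orbit of length at most $n^3+1$ (the number of $\fqq$-points of $\cX$ on $\cC$'s generators, or the size of the Hermitian unital), so the real first step is to bound this orbit by $n^3+1$ and bound the stabilizer by $n^3(n^2-1)(n^2-n+1)$.

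The main obstacle I expect is precisely this control of the orbit of $X_\infty$ and the order of its stabilizer's Sylow $p$-subgroup: showing $|S|\le n^3$ rigorously requires ruling out larger $p$-groups via the genus, and one must be careful that $\cX/S$ could be rational with a cusp, so the Nakajima-type inequality has to be applied in its sharp form (distinguishing the $|S|\le 4p g^2/(p-1)^2$ regime from the linear one). Everything else — the cyclicity of $G/S$, the final Hurwitz count, and the orbit bound coming from the embedding of $\cX$ in $\cH$ — is comparatively routine once that $p$-group bound is in hand.
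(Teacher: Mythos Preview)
Your approach is fundamentally different from the paper's, and the gaps you yourself flag are real and not easily closed with the tools you invoke.

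The paper does not do any $p$-group analysis, Nakajima/Stichtenoth-type inequalities, or orbit-stabiliser gymnastics around $X_\infty$. Instead it exploits the fact (from \cite{korchmaros-torres2001}) that every automorphism of a maximal curve in its natural embedding is induced by a linear collineation of the ambient projective space preserving the Hermitian variety $\cH$. So $\aut(\cX)\le\PGU(4,q)$. Now $\aut(\cX)$ must fix the vertex $Z_\infty$ of the cone $\cC$ (a point off $\cH$), hence also its polar plane $\pi_0:\,Z=0$. Restriction to $\pi_0$ gives a homomorphism from $\aut(\cX)$ into the automorphism group of the Hermitian curve $X^n+X=Y^{n+1}$ in $\pi_0$, whose image $S$ satisfies $|S|\le|\PGU(3,n)|=(n^3+1)n^3(n^2-1)$. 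The kernel $U$ fixes $\pi_0$ pointwise, hence stabilises every line through $Z_\infty$; since a generic such line meets $\cX$ in $n^2-n+1$ points, $|U|\le n^2-n+1$. Multiplying gives the bound. The factor $n^3+1$ thus comes from $|\PGU(3,n)|$, not from an orbit length of $X_\infty$.

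By contrast, your plan runs into trouble at the key step. The genus is $g\sim\ha n^5$, so the linear Stichtenoth bound $|S|\le\frac{p}{p-1}\cdot 2g$ only gives $|S|$ of order $n^5$, nowhere near $n^3$; and even the sharper bounds for the rational-quotient case are far too coarse here. Likewise, bounding the orbit of $X_\infty$ by $n^3+1$ intrinsically (without knowing $\aut(\cX)$ is linear) would require identifying those $n^3+1$ points as a distinguished Weierstrass locus, which you have not done. The paper's linearity input is exactly what bypasses both difficulties.
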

\begin{proof} From the remark before Theorem \ref{segoviath1},
$\aut(\cX)$ is linear, that is, it consists of all linear
collineations of $\PG(3,\K)$ preserving $\cX$. Obviously,
$\aut(\cX)$ fixes $Z_\infty$, the vertex of $\cC$. Further,
$\aut(\cX)$ preserves $\cH$ as $\cX$ lies on $\cH$, and
$\aut(\cX)$ is a subgroup of $\PGU(4,q^2)$, see \cite[Theorem
3.7]{korchmaros-torres2001}. Also, $\aut(\cX)$ must preserve the
plane $\pi_0$ of equation $Z=0$, as $\pi_0$ is the polar plane of
$Z_\infty$ under the unitary polarity arising from $\cH$.
Therefore, $\aut(\cX)$ induces a collineation group $S$ of $\pi_0$
preserving the Hermitian curve of $\pi_0$ of equation
(\ref{segovia4}). Hence, $S$ is isomorphic to a subgroup of
$\PGU(3,n)$. In particular, $|S|\leq (n^3+1)n^3(n^2-1)$. The
subgroup $U$ of $\aut(\cX)$ fixing $\pi_0$ pointwise preserves
every line through $Z_\infty$. From (\ref{segovia2}), all, but
finitely many, lines through $Z_\infty$ meeting $\cX$ contain each
exactly $n^2-n+1$ pairwise distinct common points from $\cX$.
Therefore, $|U|\leq n^2-n+1$. Since $|\aut(\cX)|=|S||U|$, the
assertion follows.
\end{proof}
For $\gcd(3,n+1)=1$, Theorem \ref{th1group} together with Lemma
\ref{lemma3group} determine $\aut(\cX)$.
\begin{theorem}
\label{th2group} If $\gcd(3,n+1)=1$, then $\aut(\cX)\cong
\SU(3,n)\times C_{n^2-n+1}$. In particular,
$|\aut(\cX)|=n^3(n^3+1)(n^2-1)(n^2-n+1)$. Furthermore, $\aut(\cX)$
is defined over $\fqq$ but it contains a subgroup isomorphic to
$\SU(3,n)$ defined over $\fns$.
\end{theorem}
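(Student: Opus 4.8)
The plan is a two-sided estimate. Theorem~\ref{th1group} already exhibits a subgroup $M\leq\aut(\cX)$ with $M\cong\SU(3,n)\times C_{n^2-n+1}$ whenever $\gcd(3,n+1)=1$, while Lemma~\ref{lemma3group} bounds $|\aut(\cX)|$ from above by $(n^3+1)n^3(n^2-1)(n^2-n+1)$. I would show that these two bounds coincide, which forces $\aut(\cX)=M$ and thereby yields the asserted isomorphism type and order at once.

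First I would compute $|M|$ in the case $\gcd(3,n+1)=1$. The lift $T$ of $\SU(3,n)$ has order $|\SU(3,n)|=n^3(n^3+1)(n^2-1)$, which is exactly the order recorded in the construction for the matrix group generated by the $Q_{(a,b)}$, $H_k$, $W$; and, as observed there, $T\cap C_{n^2-n+1}$ is trivial precisely because $\gcd(3,n+1)=1$. Hence $M=T\times C_{n^2-n+1}$ has order
$$|M|=n^3(n^3+1)(n^2-1)(n^2-n+1).$$
Since $M\leq\aut(\cX)$ by Theorem~\ref{th1group} and $|\aut(\cX)|\leq n^3(n^3+1)(n^2-1)(n^2-n+1)$ by Lemma~\ref{lemma3group}, we get $\aut(\cX)=M$, whence $\aut(\cX)\cong\SU(3,n)\times C_{n^2-n+1}$ and $|\aut(\cX)|=n^3(n^3+1)(n^2-1)(n^2-n+1)$.

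It then remains to record the fields of definition. By the convention of this section $\aut(\cX)$ is the $\fqq$-automorphism group, and the identification $\aut(\cX)=M$ represents each of its elements by one of the explicit $4\times4$ matrices: the generators $\tilde{Q}_{(a,b)},\tilde{H}_k,\tilde{W}$ of $T$ have entries in $\fns$, while the generators $\tilde{D}_\lambda$ of $C_{n^2-n+1}$ have as entries the $(n^2-n+1)$-st roots of unity, which lie in $\fqq$ because $n^2-n+1$ divides $n^3+1$ and hence divides $q^2-1$. Thus $\aut(\cX)$ is defined over $\fqq$, whereas the subgroup $T\cong\SU(3,n)$ is already defined over the proper subfield $\fns$ of $\fqq$ (proper since $q^2=n^6$ and $n=p^h\geq2$). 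I do not expect a genuine obstacle: the substantive work lies in Theorem~\ref{th1group} and Lemma~\ref{lemma3group}. The only points needing care are that the hypothesis $\gcd(3,n+1)=1$ is exactly what makes $T\cap C_{n^2-n+1}$ trivial, so that $|M|$ attains the full product rather than a proper divisor of the upper bound, and the numerical check $|\SU(3,n)|=n^3(n^3+1)(n^2-1)$ ensuring the two bounds match exactly.
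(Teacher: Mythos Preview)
Your proposal is correct and follows exactly the paper's approach: the paper itself states that for $\gcd(3,n+1)=1$, Theorem~\ref{th1group} together with Lemma~\ref{lemma3group} determine $\aut(\cX)$, which is precisely your two-sided estimate. Your added verification of the fields of definition (entries of $T$ in $\fns$, entries of $C_{n^2-n+1}$ in $\fqq$ via $n^2-n+1\mid n^3+1\mid q^2-1$) is a reasonable elaboration of what the paper leaves implicit.
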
 For $\gcd(3,n+1)=3$, we exhibit
one more linear collineation preserving $\cX$. To do this choose a
primitive $n^3+1$ roots of unity in $\fqq$, say $\rho$, and define
$\tilde{E}$ to be the diagonal matrix
$$[\rho^{-1},\rho^{n^2-n},1,\rho^{-1}].$$ It is straightforward
to check that the associated linear collineation of $\PG(3,q^2)$
preserves $\cX$, and that it induces on $\pi_0$ the collineation
$\alpha$ associated to the diagonal matrix $[1,\rho^{n^2-n+1},1]$.
In $\pi_0$, the Hermitian curve $\cH_0$ of equation
(\ref{segovia4}) is preserved by $\alpha$ which also fixes every
common point point of $\cH_0$ and the line of equation $Y=0$.
Since $\alpha$ has order $n+1$ but the stabiliser of three
collinear points of $\cH_0$ has order $(n+1)/3$ when
$\gcd(3,n+1)=3$, it turns out that $\alpha\in \PGU(3,n)\setminus
\PSU(3,n)$. Therefore, the group generated by $M$ together with
$\tilde{E}$ is larger than $M$ and, when viewed as a collineation
group of $\PG(3,q^2)$, it preserves $\cX$. This together with
Theorem \ref{th1group} and Lemma \ref{lemma3group} give the
following result.
\begin{theorem}
\label{th3group} Let $\gcd(3,n+1)=3$. Then $\aut(\cX)$ has a
normal subgroup $C_{n^2-n+1}$ such that
$\aut(\cX)/C_{n^2-n+1}\cong \PGU(3,n)$. In particular,
$|\aut(\cX)|=n^3(n^3+1)(n^2-1)(n^2-n+1)$. Also, $\aut(\cX)$ is
defined over $\fqq$ but it contains a subgroup isomorphic to
$\SU(3,n)$ defined over $\fns$. Furthermore, $\aut(\cX)$ has a
subgroup $M$ index $3$ such that $M\cong \SU(3,n)\times
C_{(n^2-n+1)/3}$.
\end{theorem}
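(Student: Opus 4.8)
The plan is to combine the three ingredients already assembled in the excerpt: the subgroup $M$ of $\aut(\cX)$ exhibited in Theorem \ref{th1group}, the extra collineation $\tilde{E}$, and the numerical upper bound of Lemma \ref{lemma3group}. First I would verify that $\tilde{E}$ genuinely preserves $\cX$ and induces on $\pi_0$ the stated collineation $\alpha$ attached to $[1,\rho^{n^2-n+1},1]$; this is the ``straightforward check'' mentioned before the statement, carried out by substituting the scaled coordinates into equations (\ref{segovia2}) and (\ref{segovia4}) exactly as in the proof of Lemma \ref{lemma1group}. Then, writing $G=\langle M,\tilde{E}\rangle\leq\aut(\cX)$, I would analyse the action of $G$ on $\pi_0$: the kernel of the restriction homomorphism $\aut(\cX)\to S$ is the pointwise stabiliser $U$ of $\pi_0$, which by the proof of Lemma \ref{lemma3group} has order dividing $n^2-n+1$, and $C_{n^2-n+1}\leq U$, so in fact $U=C_{n^2-n+1}$ is cyclic of order exactly $n^2-n+1$ and lies in the centre (since it is generated by the scalar-type matrices $\tilde{D}_\lambda$ commuting with $T$). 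The image of $G$ in $S\leq\PGU(3,n)$ contains $\PSU(3,n)$ (image of $T$) together with $\alpha$, and since $\alpha\in\PGU(3,n)\setminus\PSU(3,n)$ and $[\PGU(3,n):\PSU(3,n)]=\gcd(3,n+1)=3$, this image is all of $\PGU(3,n)$.

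Next I would push this to an equality. By Lemma \ref{lemma3group}, $|\aut(\cX)|\leq(n^3+1)n^3(n^2-1)(n^2-n+1)=|\PGU(3,n)|\cdot(n^2-n+1)$. On the other hand $|G|=|U\cap G|\cdot|\,\overline{G}\,|\geq(n^2-n+1)\cdot|\PGU(3,n)|$ once we know $C_{n^2-n+1}\leq G$ (which holds as $C_{n^2-n+1}\leq M\leq G$) and that $G$ surjects onto $\PGU(3,n)$. Hence $|G|=(n^3+1)n^3(n^2-1)(n^2-n+1)$ and $G=\aut(\cX)$; in particular $\aut(\cX)=G$, the pointwise stabiliser of $\pi_0$ in $\aut(\cX)$ is exactly $C_{n^2-n+1}$, and it is normal with quotient $\aut(\cX)/C_{n^2-n+1}\cong\PGU(3,n)$. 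The orders and the remaining two sentences (that $\aut(\cX)$ is defined over $\fqq$ and contains an $\fns$-defined $\SU(3,n)$, already recorded in Theorem \ref{th1group}) then follow immediately; the index-$3$ subgroup $M$ with $M\cong\SU(3,n)\times C_{(n^2-n+1)/3}$ is the one produced in Theorem \ref{th1group}, so it remains only to note $[\aut(\cX):M]=|\PGU(3,n)|\cdot(n^2-n+1)\big/\big(|\SU(3,n)|\cdot(n^2-n+1)/3\big)=3$.

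I expect the main obstacle to be the bookkeeping around the normality and the identification of the quotient, rather than any hard calculation. Specifically one must be careful that $C_{n^2-n+1}$ is normal in all of $\aut(\cX)$: this follows because it is characterised intrinsically as the pointwise stabiliser of $\pi_0$, and $\pi_0$ (being the polar plane of the vertex $Z_\infty$ of $\cC$ under the unitary polarity of $\cH$, as in the proof of Lemma \ref{lemma3group}) is preserved by every element of $\aut(\cX)$. Thus any $g\in\aut(\cX)$ conjugates the pointwise stabiliser of $\pi_0$ to itself, giving normality; the quotient then embeds into $S\leq\PGU(3,n)$ and equals $\PGU(3,n)$ by the order count above. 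One should also remark, to make the structure statement clean, that $9\nmid(n^2-n+1)$ when $\gcd(3,n+1)=3$ (already observed in the text), so that $C_{n^2-n+1}=C_3\times C_{(n^2-n+1)/3}$ and the $C_3$ factor is absorbed into the copy of $\SU(3,n)$ inside $T$, which is why $M$ splits as the stated direct product and has index $3$.
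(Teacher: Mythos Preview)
Your proposal is correct and follows essentially the same approach as the paper: assemble the subgroup $M$ from Theorem \ref{th1group}, adjoin the extra collineation $\tilde{E}$ (whose image $\alpha$ on $\pi_0$ falls in $\PGU(3,n)\setminus\PSU(3,n)$), and squeeze against the upper bound of Lemma \ref{lemma3group}. You have simply made explicit the details the paper leaves to the reader, in particular the identification $U=C_{n^2-n+1}$ and the normality argument via the intrinsic characterisation of $C_{n^2-n+1}$ as the pointwise stabiliser of the invariant plane $\pi_0$.
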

\section{Some quotient curves with very large auto- morphism group}
Since $\aut(\cX)$ is large, $\cX$ produces plenty of quotient
curves. Here we limit ourselves to point out that some of these
curves $\cX_1$ have very large automorphism groups, that is,
$|\aut(\cX_1)|>24g_1^2$ where $g_1$ is the genus of $\cX_1$.

For a divisor $d$ of $n^2-n+1$, the group $C_{n^2-n+1}$ contains a
subgroup $C_d$ of  order $d$. Let $\cX_1=\cX/C_d$ the quotient
curve of $\cX$ with respect to $C_d$. Since $C_d$ fixes exactly
$n^3+1$ points of $\cX$, and $C_d$ is tame, the Hurwitz genus
formula gives
$$(n^3+1)(n^2-2)=2g-2=d(2g_1-2)+(d-1)(n^3+1),$$ whence
$$g_1=\frac{1}{2} \left(\frac{(n^3+1)(n^2-d-1)}{d}+2\right).$$
Furthermore, since $C_d$ is a normal subgroup of $\aut(\cX)$, see
Theorems \ref{th2group} and \ref{th3group}, $\aut(\cX)/C_d$ is a
subgroup $G_1$ of $\aut(\cX_1)$ such that
$$|G_1|=\frac{n^3(n^3+1)(n^2-1)(n^2-n+1)}{d}.$$
Comparing $|G_1|$ to $g_1$ shows that if $d\geq 7$ then
$|G_1|>24g_1^2$.

\section{The Weierstrass semigroup at an $\fqq$-rational place}\label{semigruppo}
As we observed in Section \ref{const}, $X_\infty=(1,0,0,0)$ is the
unique infinite point of $\cX$. Our aim is to compute the
Weierstrass semigroup $H(X_\infty)$ of $\cX$ at $X_\infty$. For
this purpose, certain divisors on $\cX$ are to consider. From
Section \ref{const}, the function field $\K(\cX)$ of $\cX$ is
$\K(x,y,z)$ with $z^{n^2-n+1}=yL(x),\,x^n+x=y^{n+1}$. Let $(\xi)$
denote the principal divisor of $\xi\in \K(\cX),\, \xi\neq 0$.
Note that
$$
(x)_\infty=(n^3+1)\PI,\,\,(y)_\infty=(n^3-n^2+n)\PI, \,\,
(yh(x))_\infty=(n^3(n^2-n+1))\PI,
$$
whence $(z)_\infty=n^3\PI$.

A useful tool for the study of $H(\PI)$ is the concept of a
telescopic semigroup, see \cite[Section 5.4]{H-V-P}. Let
$(a_1,\ldots,a_k)$ be a sequence of positive integers with
greatest common divisor $1$. Define
$$
d_i=gcd(a_1,\ldots,a_i)\qquad \text{and}\qquad
A_i=\{a_1/d_i,\ldots,a_i/d_i\}
$$
for $i=1,\ldots,k$. Let $d_0=0$. If $a_i/d_i$ belongs to the
semigroup generated by $A_{i-1}$ for $i=2,\ldots,k$, then the
sequence $(a_1,\ldots,a_k)$ is said to be {\em telescopic}. A
semigroup is called telescopic if it is generated by a telescopic
sequence. Recall that the genus of a numerical semigroup $\Lambda$
is defined as the size of ${\mathbb N}_0\setminus \Lambda$. By
Proposition 5.35 in \cite{H-V-P}, the genus of a semigroup
$\Lambda$ generated by a telescopic sequence $(a_1,\ldots,a_k)$ is
\begin{equation}\label{tele}
g(\Lambda)=\frac{1}{2}\left(1+\sum_{i=1}^k\left(\frac{d_{i-1}}{d_i}-1\right)a_i\right)
\end{equation}

\begin{lemma} The genus of the numerical semigroup generated by the three integers  $n^3-n^2+n,n^3,n^3+1$
is
$$
\frac{(n^3+1)(n^2-2)}{2}+1
$$
\end{lemma}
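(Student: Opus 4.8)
The plan is to verify that the sequence $(a_1,a_2,a_3)=(n^3-n^2+n,\,n^3,\,n^3+1)$ is telescopic in the sense recalled above, and then to apply formula~(\ref{tele}) directly. The genus value claimed is exactly $g=\ha(n^3+1)(n^2-2)+1$, which matches the genus of $\cX$ computed in Theorem~\ref{segoviath1}; this is of course no coincidence, since the three integers are the pole orders at $\PI$ of $y$, $z$, $x$ respectively, and these functions generate $\K(\cX)$, so the semigroup they generate is a subsemigroup of $H(\PI)$ — but the lemma as stated is purely a numerical-semigroup assertion and that is what I would prove here.

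First I would compute the chain of gcd's. Set $d_0=0$, $d_1=\gcd(a_1)=n^3-n^2+n=n(n^2-n+1)$. Next $d_2=\gcd(d_1,n^3)=\gcd(n(n^2-n+1),n^3)$; since $\gcd(n^2-n+1,n)=1$, this equals $n\cdot\gcd(n^2-n+1,n^2)=n$, because $\gcd(n^2-n+1,n^2)$ divides $n^2-(n^2-n+1)=n-1$ and also divides $(n^2-n+1)-(n-1)\cdot 1 \cdot n$-type combinations; more cleanly $\gcd(n^2-n+1,n^2)=\gcd(n^2-n+1,n-1)=\gcd(1,n-1)=1$. Hence $d_2=n$. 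Finally $d_3=\gcd(n,n^3+1)=\gcd(n,1)=1$, as required for the sequence to have overall gcd $1$.

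Then I would check the telescopic conditions. For $i=2$: $a_2/d_2=n^3/n=n^2$ must lie in the semigroup generated by $A_1=\{a_1/d_1\}=\{n^2-n+1\}$, i.e. $n^2$ must be a nonnegative integer multiple of $n^2-n+1$ — but that forces $n^2\ge n^2-n+1$ with the quotient $1$, giving $n^2=n^2-n+1$, i.e. $n=1$, which is false for $n\ge 2$. So the naive ordering fails, and the main obstacle is precisely choosing the right order of the three generators. I would instead try the ordering $(a_1,a_2,a_3)=(n^3-n^2+n,\,n^3+1,\,n^3)$. Then $d_1=n(n^2-n+1)$ as before, $d_2=\gcd(n(n^2-n+1),n^3+1)$; since $n^3+1=(n+1)(n^2-n+1)$ and $\gcd(n,n^3+1)=1$, this gcd is $n^2-n+1$. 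For $i=2$ we need $a_2/d_2=(n^3+1)/(n^2-n+1)=n+1$ to lie in the semigroup generated by $A_1=\{(n^3-n^2+n)/d_1\}=\{1\}$ — which it does, trivially, since the semigroup generated by $1$ is all of $\N_0$. Then $d_3=\gcd(n^2-n+1,n^3)=1$ by the computation above, so the overall gcd is $1$; and for $i=3$ we need $a_3/d_3=n^3$ to lie in the semigroup generated by $A_2=\{(n^3-n^2+n)/(n^2-n+1),\,(n^3+1)/(n^2-n+1)\}=\{n,\,n+1\}$, which again holds since every integer $\ge n(n-1)$ — in particular $n^3$ — is representable by $n$ and $n+1$. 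Hence this ordering is telescopic.

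Finally I would plug into~(\ref{tele}). With the working ordering, $d_0/d_1=0$, $d_1/d_2=\big(n(n^2-n+1)\big)/(n^2-n+1)=n$, and $d_2/d_3=(n^2-n+1)/1=n^2-n+1$. Therefore
\[
g(\Lambda)=\frac{1}{2}\Bigl(1+(0-1)(n^3-n^2+n)+(n-1)(n^3+1)+\bigl((n^2-n+1)-1\bigr)n^3\Bigr).
\]
Expanding: $(0-1)(n^3-n^2+n)=-n^3+n^2-n$; $(n-1)(n^3+1)=n^4-n^3+n-1$; $(n^2-n)n^3=n^5-n^4$. Summing these three together with the leading $1$ gives $n^5 - n^4 + n^4 - n^3 - n^3 + n^2 + (-n+n) + (1-1) = n^5 - 2n^3 + n^2$, and one checks $n^5-2n^3+n^2 = (n^3+1)(n^2-2)+2$. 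Hence $g(\Lambda)=\tfrac12\big((n^3+1)(n^2-2)+2\big)=\tfrac12(n^3+1)(n^2-2)+1$, which is the asserted value. The only genuine subtlety is the reordering in the first paragraph of the argument and the two elementary "numerical semigroup representability'' facts ($\N_0=\langle 1\rangle$ and $n^3\in\langle n,n+1\rangle$); everything else is a direct substitution.
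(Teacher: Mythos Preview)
Your argument is correct and follows exactly the paper's approach: verify that the generating triple is telescopic and apply formula~(\ref{tele}). One slip is worth flagging, though: your assertion that the ordering $(n^3-n^2+n,\,n^3,\,n^3+1)$ fails the telescopic condition is wrong, because $d_1=\gcd(a_1)=a_1$ always, so $A_1=\{a_1/d_1\}=\{1\}$ (not $\{n^2-n+1\}$), and the $i=2$ condition is automatically satisfied for any sequence. With the paper's ordering one has $d_2=n$, $A_2=\{n^2-n+1,\,n^2\}$, $d_3=1$, and $a_3/d_3=n^3+1=(n+1)(n^2-n+1)\in\langle A_2\rangle$, so the original sequence is telescopic exactly as the paper claims. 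Your reordering $(n^3-n^2+n,\,n^3+1,\,n^3)$ is also telescopic and your genus computation from it is correct, so the detour is harmless for the final result; but you should remove the inconsistent evaluation of $A_1$ (you get it right the second time) and the erroneous claim that the paper's ordering fails.
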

\begin{proof} The sequence $(n^3-n^2+n,n^3,n^3+1)$ is telescopic.
Then (\ref{tele}) applies, and the claim follows from
straightforward computation.
\end{proof}

\begin{proposition}\label{semi}
The Weierstrass semigroup of $F$ at $\PI$ is the subgroup
generated by $n^3-n^2+n,n^3,n^3+1$.
\end{proposition}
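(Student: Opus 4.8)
The plan is to sandwich the Weierstrass semigroup $H(\PI)$ between two numerical semigroups that coincide. On one side, the three functions $x, y, z \in \K(\cX)$ have poles only at $\PI$, with pole orders $n^3+1$, $n^3-n^2+n$, and $n^3$ respectively, as computed just above the statement; hence the semigroup $S$ generated by $n^3-n^2+n$, $n^3$, $n^3+1$ is contained in $H(\PI)$. On the other side, by the previous lemma the genus of $S$ equals $\ha(n^3+1)(n^2-2)+1$, which by Theorem \ref{segoviath1} is exactly the genus $g$ of $\cX$. The genus of $\cX$ equals $\#(\mathbb{N}_0 \setminus H(\PI))$, and the genus of $S$ equals $\#(\mathbb{N}_0 \setminus S)$; since $S \subseteq H(\PI)$ forces $\mathbb{N}_0 \setminus H(\PI) \subseteq \mathbb{N}_0 \setminus S$, the equality of cardinalities $g = g(S) < \infty$ yields $\mathbb{N}_0 \setminus H(\PI) = \mathbb{N}_0 \setminus S$, hence $H(\PI) = S$.

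So the only two things that really need to be verified are: (i) that $x, y, z$ have poles supported only at $\PI$ with the stated orders, and (ii) that $S \subseteq H(\PI)$. For (ii), once each generator of $S$ is realized as the pole order at $\PI$ of a function regular elsewhere, closure of $H(\PI)$ under addition gives all of $S$. For (i), since $\PI$ is the unique point at infinity of $\cX$, each of $x,y,z$ has its polar divisor concentrated at $\PI$; the pole orders $(x)_\infty = (n^3+1)\PI$, $(y)_\infty=(n^3-n^2+n)\PI$ come from the displayed computation in Section \ref{semigruppo}, and $(z)_\infty = n^3 \PI$ follows from $z^{n^2-n+1}=yh(x)$ together with $(yh(x))_\infty = n^3(n^2-n+1)\PI$. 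These facts are already recorded in the paragraph preceding the lemma, so I would simply invoke them.

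The main (and essentially only) obstacle is confirming that the degree $n^3-n^2+n$ is achievable, i.e. that $y$ genuinely has a pole of that exact order at $\PI$ and is regular at every affine point; equivalently, that the intersection of $\cX$ with the plane at infinity is concentrated at $\PI$ with the right multiplicity. This is the content of the pole-order computation in Section \ref{semigruppo}, which I would treat as given. Everything else is the short counting argument above: a subsemigroup of $\mathbb{N}_0$ with finite, equal genus must equal the ambient semigroup. I would phrase the proof in two sentences invoking the previous lemma and Theorem \ref{segoviath1}, with the semigroup-inclusion step justified by exhibiting $x,y,z$ as in the preceding paragraph.
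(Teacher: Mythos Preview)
Your proposal is correct and follows exactly the paper's approach: show the containment $\Lambda\subseteq H(\PI)$ via the pole orders of $x,y,z$ recorded just before the statement, then conclude equality from $g(H(\PI))=g(\Lambda)$ using the previous lemma and Theorem~\ref{segoviath1}. The paper's proof is the same two-line argument, only stated more tersely.
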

\begin{proof} The numerical semigroup $\Lambda$ generated by $n^3-n^2+n,n^3,n^3+1$ is clearly contained in
$H(\PI)$. As $g(H(\PI))=g(\Lambda)$, the claim follows.
\end{proof}
As a corollary, we have the following result.
\begin{proposition}
\label{ordseqX} The order sequence of $\cX$ at $\PI$ is
$(0,1,n^2-n+1,n^3+1)$.
\end{proposition}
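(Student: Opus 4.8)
The plan is to deduce the order sequence (the $(\PI)$-orders of the canonical linear system, equivalently here the orders of the morphism to $\PG(3,\K)$ at $\PI$) directly from the structure of the Weierstrass semigroup $H(\PI)$ computed in Proposition \ref{semi}. Recall from the remark following Lemma \ref{segoviairr} that $\cX$ is embedded in $\PG(3,\K)$ by the linear series $\cD=|qP+\varphi(P)|$, which is cut out by the planes; in particular this is a $\mathfrak g^3_{n^3+1}$. The $(P)$-order sequence of a point $P$ is the increasing sequence $(j_0,j_1,j_2,j_3)$ of intersection multiplicities $I(P,\cX\cap\pi)$ realised by the planes $\pi$ of $\PG(3,\K)$, and $j_0=0$ always. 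At $P=\PI$ the hyperplanes containing $\PI$ give the pole orders at $\PI$ of the corresponding functions, so the orders $j_i$ are among the gaps-complement data: concretely, since $(x)_\infty=(n^3+1)\PI$, $(y)_\infty=(n^3-n^2+n)\PI$ and $(z)_\infty=n^3\PI$ as recorded before Proposition \ref{semi}, the four coordinate functions $1,x,y,z$ have pole divisors at $\PI$ equal to $0,(n^3+1)\PI,(n^3-n^2+n)\PI,n^3\PI$.

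First I would normalise: the $(\PI)$-orders of the morphism are obtained from the pole numbers $\{0,\ n^3-n^2+n,\ n^3,\ n^3+1\}$ by subtracting the smallest, i.e. they are $0$ and the three positive pole numbers themselves; but one must re-sort and, more importantly, justify that these four numbers really are the intersection multiplicities $I(\PI,\cX\cap\pi)$ of the plane sections and that no plane section meets $\cX$ at $\PI$ with a different multiplicity. For this one uses the standard dictionary: for a point $P$ and an embedding by a base-point-free $\cD\subseteq|D|$, the $(P)$-orders are exactly the distinct values $v_P(D) - v_P(\,\text{div}(f)+D\,)$ as $f$ ranges over the projective space $\cD$; taking $D$ supported away from $\PI$ and the four basis functions $1,x,y,z$ one gets the four values $v_\PI$ above, and a standard count (the $(P)$-orders form a set of size $\dim\cD+1=4$, and $1,x,y,z$ already give four distinct values) shows these are all of them. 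Hence the order sequence at $\PI$ is the increasing rearrangement of $(0,\ n^3-n^2+n,\ n^3,\ n^3+1)$.

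The remaining point is purely arithmetical: to rewrite this in the claimed form $(0,1,n^2-n+1,n^3+1)$. But these two quadruples are not literally equal — so the correct reading of the proposition must be that the order sequence is the sequence of consecutive differences, or rather the $\cD$-orders in the sense of the $\epsilon_i$ of the Stöhr–Voloch theory, which for a Frobenius-nonclassical-type embedding like this one are obtained from the pole numbers by an affine normalisation. Concretely: $n^3-n^2+n, n^3, n^3+1$ differ from $0$ by themselves, but the order sequence $(\epsilon_0,\dots,\epsilon_3)$ of the linear system is read off by noting $\epsilon_1=1$ since $\cX$ is non-singular (there is a plane section with multiplicity exactly $1$ at a general point, and at $\PI$ one checks directly that $y$ and $z$ give pole orders whose successive differences with $n^3+1$ are $n^3+1-n^3=1$, etc.). I would therefore present the computation as: the pole numbers at $\PI$ in $H(\PI)$ that are $\le n^3+1$ and arise from plane sections are $0,\ n^3-n^2+n,\ n^3,\ n^3+1$; subtracting consecutively, the gaps between them, equivalently the orders $j_i$ after the projective normalisation fixing $j_0=0,\ j_3=n^3+1$, are $0,\ (n^3+1)-n^3=1$ — wait, this needs care — so the honest step is: $j_0=0$, and $j_i = (n^3+1)-a_{3-i}$ where $a_1=n^3-n^2+n<a_2=n^3<a_3=n^3+1$ are the pole numbers of $x/t$-type... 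Let me simply say: a direct check gives $n^3+1-(n^3)=1$ and $n^3+1-(n^3-n^2+n)=n^2-n+1$, so the normalised orders are $0,1,n^2-n+1,n^3+1$. \textbf{The main obstacle} is getting this normalisation precisely right — distinguishing the pole numbers of the coordinate functions from the $\cD$-orders $\epsilon_i$ of Stöhr–Voloch, and verifying that the four values obtained are genuinely the complete order sequence (no coincidences forcing extra orders); this is where I would invoke \cite{korchmaros-torres2001} and the non-singularity of $\cX$ to pin down $\epsilon_1=1$ and then read the other two orders off the pole-number data of $y$ and $z$ at $\PI$.
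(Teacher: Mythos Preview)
Your approach is the paper's approach: the proposition is stated there simply as a corollary of Proposition~\ref{semi}, with no further argument, and the computation you eventually perform --- $j_i=(n^3+1)-a_{3-i}$ for the pole orders $a_0=0<a_1=n^3-n^2+n<a_2=n^3<a_3=n^3+1$ of $1,y,z,x$ --- is exactly the right one and gives the claimed sequence.

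What needs fixing is not the mathematics but the conceptual framing. The ``order sequence of $\cX$ at $\PI$'' means the $(\cD,\PI)$-orders $j_0(\PI)<\cdots<j_3(\PI)$, i.e.\ the possible intersection multiplicities $I(\PI,\cX\cap\pi)$ as $\pi$ ranges over planes; it is \emph{not} the generic order sequence $(\epsilon_0,\dots,\epsilon_3)$ of St\"ohr--Voloch, and it is not a sequence of consecutive differences. Since $\PI$ is $\fqq$-rational, the linear series is $|(n^3+1)\PI|$, and for any $f$ in the span of $1,x,y,z$ the corresponding plane section has multiplicity $v_{\PI}(f)+(n^3+1)$ at $\PI$. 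The four basis functions already give four distinct values $n^3+1,\,0,\,n^2-n+1,\,1$, and since the linear series has projective dimension $3$ there can be no others. That is the whole argument; the detours through ``subtracting the smallest'', ``consecutive differences'', and invoking $\epsilon_1=1$ via non-singularity are all unnecessary and some are incorrect (e.g.\ $\epsilon_1=1$ for the generic point says nothing a priori about $j_1(\PI)$).
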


Lemma 5.34 in \cite{H-V-P} enables us to compute a basis of the
linear space $L(m\PI)$ for every positive integer $m$.
\begin{lemma}[Lemma 5.34 in \cite{H-V-P}]\label{tele2} If $(a_1,\ldots,a_k)$ is
telescopic, then for every $m$ in the semigroup generated by
$a_1,\ldots,a_k$ there exist uniquely determined non-negative
integers $j_1,\ldots,j_k$ such that $0\le j_i<\frac{d_{i-1}}{d_i}$
for $i=2,\ldots, k$ and
$$
m=\sum_{i=1}^k j_ia_i.
$$
\end{lemma}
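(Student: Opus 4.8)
The plan is to prove existence and uniqueness of the representation $m=\sum_{i=1}^{k}j_ia_i$ separately, each by induction on $k$, using the one structural feature of a telescopic sequence: that $a_k/d_k$ belongs to the semigroup generated by $A_{k-1}=\{a_1/d_{k-1},\ldots,a_{k-1}/d_{k-1}\}$. Clearing denominators, this says
$$
\frac{d_{k-1}}{d_k}\,a_k=\sum_{i=1}^{k-1}e_ia_i
$$
for suitable non-negative integers $e_1,\ldots,e_{k-1}$. Two elementary facts will be used throughout: $d_k\mid d_{k-1}$, so $d_{k-1}/d_k$ is a positive integer; and $d_k=\gcd(d_{k-1},a_k)$, so $\gcd\!\left(d_{k-1}/d_k,\,a_k/d_k\right)=1$. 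The base case $k=1$ is immediate: $\langle a_1\rangle$ is the set of non-negative multiples of $a_1=d_1$, and no bound is imposed on $j_1$.

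For existence, start from any witness $m=\sum_{i=1}^{k}c_ia_i$ with $c_i\ge 0$. While $c_k\ge d_{k-1}/d_k$, I would replace $d_{k-1}/d_k$ copies of $a_k$ by $\sum_{i<k}e_ia_i$; this strictly lowers $c_k$ and keeps all coefficients non-negative, so after finitely many steps one obtains $m=j_ka_k+m'$ with $0\le j_k<d_{k-1}/d_k$ and $m'\in\langle a_1,\ldots,a_{k-1}\rangle$. Every element of $\langle a_1,\ldots,a_{k-1}\rangle$ is a multiple of $d_{k-1}$, and $(a_1/d_{k-1},\ldots,a_{k-1}/d_{k-1})$ is telescopic with greatest common divisor $1$ and with invariants $d_i/d_{k-1}$, hence with the same successive ratios $d_{i-1}/d_i$. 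Applying the induction hypothesis to this sequence and to $m'/d_{k-1}$, then multiplying back by $d_{k-1}$, produces $j_1,\ldots,j_{k-1}$ obeying the required bounds.

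For uniqueness, suppose $\sum_{i=1}^{k}j_ia_i=\sum_{i=1}^{k}j_i'a_i$ with both tuples admissible. Reducing modulo $d_{k-1}$, which divides $a_1,\ldots,a_{k-1}$, leaves $(j_k-j_k')a_k\equiv 0\pmod{d_{k-1}}$; dividing by $d_k$ and using $\gcd(d_{k-1}/d_k,a_k/d_k)=1$ gives $(d_{k-1}/d_k)\mid(j_k-j_k')$, which with $|j_k-j_k'|<d_{k-1}/d_k$ forces $j_k=j_k'$. Cancelling $j_ka_k$ and dividing by $d_{k-1}$ reduces the equality to one among elements of the semigroup of $(a_1/d_{k-1},\ldots,a_{k-1}/d_{k-1})$, and the induction hypothesis finishes the proof.

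I expect the main difficulty to be bookkeeping, not insight: one must check carefully that passing to $(a_1/d_{k-1},\ldots,a_{k-1}/d_{k-1})$ preserves telescopicity and rescales each $d_i$ to $d_i/d_{k-1}$, so that the constraints $0\le j_i<d_{i-1}/d_i$ survive the rescaling unchanged, and that the descent step in the existence argument both terminates and never produces a negative coefficient. The cleanest packaging is to run one induction over all tails $(a_1,\ldots,a_\ell)$ and their normalisations simultaneously, so that the rescaling is part of the inductive statement.
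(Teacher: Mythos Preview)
The paper does not contain a proof of this lemma at all: it is simply quoted as Lemma~5.34 from the handbook article of H{\o}holdt, van~Lint and Pellikaan, and is then used as a black box to obtain Proposition~\ref{base}. So there is no ``paper's own proof'' to compare against.

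That said, your argument is correct and is essentially the standard one. The key observations you use are exactly the ones needed: that $(d_{k-1}/d_k)\,a_k$ lies in $\langle a_1,\ldots,a_{k-1}\rangle$ (the telescopic condition, cleared of denominators) for the existence step, and that $\gcd(d_{k-1}/d_k,\,a_k/d_k)=1$ for the uniqueness step. Your verification that the normalised tail $(a_1/d_{k-1},\ldots,a_{k-1}/d_{k-1})$ is again telescopic, with the same successive ratios $d_{i-1}/d_i$, is precisely what makes the induction go through; this is the ``bookkeeping'' you correctly anticipate, and it works because the sets $A_i$ are unchanged under this rescaling. The descent in the existence part clearly terminates (each replacement lowers $c_k$ by the positive integer $d_{k-1}/d_k$) and keeps all coefficients non-negative, and after the loop the residual $m'$ is visibly a non-negative combination of $a_1,\ldots,a_{k-1}$, hence divisible by $d_{k-1}$. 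There is no gap.
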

\begin{proposition}\label{base} For a positive integer $m$, a basis of the linear space $L(m\PI)$ is
$$
\{y^{j_1}z^{j_2}x^{j_3}\mid j_1(n^3-n^2+n)+j_2n^3+j_3(n^3+1)\le m,
j_i\ge 0, j_2\le n^2-n, j_3\le n-1\}.
$$
\end{proposition}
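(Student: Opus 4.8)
The plan is to derive the statement from three facts already in place: the pole–order data at $\PI$ recorded at the start of Section~\ref{semigruppo}, the uniqueness of telescopic representations (Lemma~\ref{tele2}), and the identification $H(\PI)=\Lambda$ of Proposition~\ref{semi}, where $\Lambda$ denotes the semigroup generated by $a_1=n^3-n^2+n$, $a_2=n^3$, $a_3=n^3+1$.

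First I would note that $v_{\PI}(y)=-a_1$, $v_{\PI}(z)=-a_2$, $v_{\PI}(x)=-a_3$, and that $x,y,z$ have no poles other than at $\PI$; both are contained in the divisor computations at the beginning of Section~\ref{semigruppo}. By additivity of the valuation, every monomial $f=y^{j_1}z^{j_2}x^{j_3}$ with $j_i\ge 0$ then satisfies $(f)_\infty=(j_1a_1+j_2a_2+j_3a_3)\,\PI$, so $f\in L(m\PI)$ exactly when $j_1a_1+j_2a_2+j_3a_3\le m$; in particular every element of the displayed family lies in $L(m\PI)$, with pole order at $\PI$ equal to $j_1a_1+j_2a_2+j_3a_3$.

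Next I would pin down the numbers $d_i$ attached to the telescopic sequence $(a_1,a_2,a_3)$. Since $n^2-n+1\equiv 1\pmod n$ one gets $d_1=n(n^2-n+1)$ and $d_2=\gcd(n(n^2-n+1),n^3)=n$, using $\gcd(n^2-n+1,n^2)=1$ (which follows from $n^2-n+1\equiv 1\pmod{n-1}$ as well), and $d_3=\gcd(n,n^3+1)=1$. Hence $d_1/d_2=n^2-n+1$ and $d_2/d_3=n$, so the bounds $j_2\le n^2-n$ and $j_3\le n-1$ in the statement are precisely the ranges $0\le j_i<d_{i-1}/d_i$ appearing in Lemma~\ref{tele2}. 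That lemma then says the map $(j_1,j_2,j_3)\mapsto j_1a_1+j_2a_2+j_3a_3$ is injective on the index set of the displayed family, with image exactly $\Lambda\cap\{0,1,\dots,m\}$. Consequently the monomials in the family have pairwise distinct pole orders at $\PI$, hence are linearly independent, and there are exactly $\#\bigl(\Lambda\cap\{0,\dots,m\}\bigr)$ of them.

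Finally I would match cardinalities with $\dim L(m\PI)$: since $\dim L(m\PI)-\dim L((m-1)\PI)$ equals $1$ or $0$ according as $m\in H(\PI)$ or not, we have $\dim L(m\PI)=\#\{s\in H(\PI):0\le s\le m\}$, which equals $\#\bigl(\Lambda\cap\{0,\dots,m\}\bigr)$ by Proposition~\ref{semi}. Thus the displayed family consists of $\dim L(m\PI)$ linearly independent elements of $L(m\PI)$ and is therefore a basis. The only step that takes any care is the $\gcd$ bookkeeping lining up $d_{i-1}/d_i$ with the bounds $n^2-n+1$ and $n$; the rest is additivity of valuations, Lemma~\ref{tele2}, and an elementary Riemann--Roch count that rests on the semigroup identification of Proposition~\ref{semi}.
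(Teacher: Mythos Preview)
Your proof is correct and follows the same approach as the paper, which simply states that the result is an immediate consequence of Lemma~\ref{tele2}; you have unpacked that one-line proof in full by verifying the $d_i$, invoking uniqueness to get distinct pole orders and hence linear independence, and matching the count with $\dim L(m\PI)$ via Proposition~\ref{semi}. One cosmetic remark: your parenthetical that $\gcd(n^2-n+1,n^2)=1$ ``follows from $n^2-n+1\equiv 1\pmod{n-1}$'' is not the right congruence---what you want is $n^2-n+1\equiv 1\pmod{n}$---but the fact itself is true and immediate, so this does not affect the argument.
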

\begin{proof}
The result is an immediate consequence of Lemma \ref{tele2}.
\end{proof}


\begin{thebibliography}{999}

\bibitem{abdon-garcia2004}
M.~Abd{\'o}n and A.~Garcia, On a characterization of certain
maximal curves,
  \emph{Finite Fields Appl.} \textbf{10} (2004), 133--158.

\bibitem{abdon-quoos2004}
M.~Abd{\'o}n and L.~Quoos, On the genera of subfields of the
{H}ermitian function field, \emph{Finite Fields Appl.} \textbf{10}
(2004), 271--284.


\bibitem{abdon-torres1999} M.~Abd\'on and F.~Torres, On maximal
curves in characteristic two, \emph{Manuscripta Math.} {\bf 99}
(1999), 39--53.

\bibitem{abdon-torres2005} M.~Abd\'on and F.~Torres,
On $F\sb{q\sp 2}$-maximal curves of genus $\frac{1}{6}(q-3)q$,
{\em Beitr\"age Algebra Geom.} {\bf 46} (2005), 241--260.


\bibitem{cakcak-ozbudak2004}
E.~\c{C}ak\c{c}ak and F.~\"Ozbudak,  Subfields of the function
field of the Deligne--Lusztig curve of Ree type, \emph{Acta
Arith.} {\bf 115} (2004), 133--180.

\bibitem{cakcak-ozbudak2005}
E.~\c{C}ak\c{c}ak and F.~\"Ozbudak, Number of rational places of
subfields of the function field of the Deligne--Lusztig curve of
Ree type, \emph{Acta Arith.} \textbf{120} (2005), 79--106.

\bibitem{cossidente-korchmaros-torres1999}
A.~Cossidente, G.~Korchm\'aros and F.~Torres, On curves covered by
the Hermitian curve, \emph{J. Algebra} {\bf 216} (1999), 56--76.

\bibitem{cossidente-korchmaros-torres2000}
A.~Cossidente, G.~Korchm\'aros and F.~Torres,  Curves of large
genus covered by the Hermitian curve, \emph{Comm. Algebra} {\bf
28} (2000), 4707--4728.

\bibitem{fgt} R. Fuhrmann, A. Garcia and F. Torres, On maximal
curves, {\em J. Number Theory} {\bf 67}(1) (1997), 29--51.

\bibitem{ft} R. Fuhrmann and F. Torres, The genus of curves over
finite fields with many rational points, {\em Manuscripta Math.}
{\bf 89} (1996), 103--106.

\bibitem{ft1} R. Fuhrmann and F. Torres, On Weierstrass points and
optimal curves, {\em Rend. Circ. Mat. Palermo Suppl.} {\bf 51}
(Recent Progress in Geometry, E. Ballico, G. Korchm\'aros Eds.)
(1998), 25--46.



\bibitem{garcia2001sur} A.~Garcia,
Curves over finite fields attaining the Hasse--Weil upper bound,
 {\em European Congress of Mathematics$,$ Vol. II} (Barcelona, 2000),
 Progr. Math. \textbf{202}, Birkh{\"a}user, Basel,
2001, 199--205.

\bibitem{garcia2002} A.~Garcia, On curves with many rational points over
finite fields, {\em Finite Fields with Applications to Coding
Theory, Cryptography and Related Areas},   Springer, Berlin, 2002,
152--163.

\bibitem{garcia-stichtenoth1995ieee} A.~Garcia and H.~Stichtenoth,
Algebraic function fields over finite fields with many rational
places, {\em IEEE Trans. Inform. Theory} {\textbf{41}} (1995),
1548--1563.


\bibitem{garcia-stichtenoth1999bis} A.~Garcia and H.~Stichtenoth,
On Chebyshev polynomials and maximal curves, {\em Acta Arith.}
{\textbf{90}} (1999), 301--311.

\bibitem{garcia-stichtenoth2006} A.~Garcia and  H.~Stichtenoth,
  A maximal curve which is not a Galois subcover of the
{H}ermitian curve, \emph{Bull. Braz. Math. Soc. (N.S.)} {\bf 37}
(2006), 139--152.

\bibitem{garcia-stichtenoth-xing2001} A.~Garcia, H.~Stichtenoth and
C.P.~Xing, On subfields of the Hermitian function field,
\emph{Compositio Math.} {\bf  120} (2000),  137--170.

\bibitem{giulietti-hirschfeld-korchmaros-torres2006}
M.~Giulietti, J.W.P.~Hirschfeld, G.~Korchm\'aros and F.~Torres,
Curves covered by the Hermitian curve, \emph{Finite Fields Appl.}
{\bf 12} (2006), 539--564.

\bibitem{giulietti-hirschfeld-korchmaros-torres2006bis}
M.~Giulietti, J.W.P.~Hirschfeld, G.~Korchm\'aros and F.~Torres,
Families of curves covered by the Hermitian curve, \emph{S\'emin.
Cong.}, to appear.


\bibitem{giulietti-korchmaros-torres2004} M.~Giulietti,
G.~Korchm\'aros and F.~Torres,  Quotient curves of the
Deligne--Lusztig curve of Suzuki type, \emph{Acta Arith.}, {\bf
122} (2006), 245--274.


\bibitem{hansen1992} J.P.~Hansen, Deligne--Lusztig varieties and group
codes,  Lecture Notes in Math. {\bf 1518}, Springer, Berlin,
1992, 63--81.

\bibitem{hansen-sti1990} J.P.~Hansen and H.~Stichtenoth, Group codes
on certain algebraic curves with many rational points, \emph{Appl.
Algebra Eng. Comm. Comput.} \textbf{1} (1990), 67--77.

\bibitem{henn1978} H.-W.~Henn,  Funktionenk{\"o}rper mit grosser
Automorphismengruppe, \emph{J. Reine Angew. Math.} {\bf 302}
(1978), 96--115.

\bibitem{hirschfeld1985} J.W.P.~Hirschfeld,
\emph{Finite Projective Spaces of Three Dimensions}, Oxford Univ.
Press, Oxford, 1985, x+316 pp.


\bibitem{H-V-P} T.~H{\o}holdt, J.~Van Lint, R.~Pellikaan,
Algebraic geometry codes, in: V.S. Pless, W.C. Huffman (Eds.),
{\em Handbook of Coding Theory}, North-Holland, 1998, pp. 871-961.

\bibitem{korchmaros-torres2001}
G.~Korchm\'aros and F. Torres,  Embedding of a maximal curve in a
Hermitian variety, \emph{Compositio Math.} {\bf 128} (2001),
95--113.

\bibitem{lachaud1987} G. Lachaud, Sommes d'Eisenstein et nombre de points
de certaines courbes alg\'ebriques sur les corps finis, {\em C.R.
Acad. Sci. Paris} {\bf 305}, S\'erie I (1987), 729--732.

\bibitem{pasticci2006} F.~Pasticci, On quotient curves of the Suzuki
curve, \emph{Ars Comb.}, to appear.

\bibitem{pedersen1992} J.P.~Pedersen, A function field related to the
{R}ee group,  \emph{Coding Theory and   Algebraic Geometry},
Lecture Notes in Math. {\bf 1518}, Springer, Berlin,   1992,
122--132.

\bibitem{roquette1970} P.~Roquette,
Absch{\"a}tzung der Automorphismenanzahl von
Funktionen\-k{\"o}rpern bei Primzahlcharakteristik, \emph{Math.
Z.} {\bf 117} (1970), 157--163.

\bibitem{r-sti} H.G. R\"uck and  H. Stichtenoth, A characterization
of Hermitian function fields over finite fields, {\em J. Reine
Angew. Math.} {\bf 457} (1994), 185--188.


\bibitem{segre1965} B.~Segre,
 Forme e geometrie hermitiane, con particolare riguardo al
caso finito, \emph{Ann. Mat. Pura Appl.} {\bf{70}} (1965), 1--201.



\bibitem{stichtenoth1973I} H.~Stichtenoth,
\"Uber die Automorphismengruppe eines algebraischen
Funktionenk{\"o}rpers von Primzahlcharakteristik. I. Eine
Absch{\"a}tzung der Ordnung der Automorphismengruppe, \emph{Arch.
Math.} {\bf 24} (1973), 527--544.

\bibitem{stichtenoth1973II} H.~Stichtenoth,
\"Uber die Automorphismengruppe eines algebraischen
Funktionenk{\"o}rpers von Primzahlcharakteristik. II. Ein
spezieller Typ von Funktionenk{\"o}rpern, \emph{Arch. Math.} {\bf
24} (1973), 615--631.

\bibitem{sti-x} H. Stichtenoth and C.P. Xing, The genus of maximal
function fields, {\em Manuscripta Math.} {\bf 86} (1995),
217--224.

\bibitem{geer2001sur} G.~van der Geer, Curves over finite fields and
codes, {\em European Congress of Mathematics$,$ Vol. II}
(Barcelona, 2000),  Progr. Math. {\bf 202}, Birkh{\"a}user, Basel,
2001, 225--238.

\bibitem{geer2001nato} G.~van der Geer, Coding theory and algebraic
curves over finite fields: a survey and questions, {\em
Applications of Algebraic Geometry to Coding Theory, Physics and
Computation}, NATO Sci. Ser. II Math. Phys. Chem. {\bf 36},
Kluwer, Dordrecht,
 2001, 139--159.

   \end{thebibliography}
    \end{document}